\tikzset{shorten <>/.style={shorten >=#1,shorten <=#1}}
\newtheorem{cor}[subsection]{Corollary}
\newtheorem{lem}[subsection]{Lemma}
\newtheorem{prop}[subsection]{Proposition}
\newtheorem{conj}[subsection]{Conjecture}
\newtheorem{thm}[subsection]{Theorem}
\theoremstyle{definition}
\newtheorem{rem}[subsection]{Remark}
\newtheorem{ex}[subsection]{Example}
\newtheorem*{ackn}{Acknowledgments}
\theoremstyle{remark}
\newcommand{\secref}[1]{Section~\ref{#1}}
\renewcommand{\eqref}[1]{(\ref{#1})}
\renewcommand{\xspace}{}
\newcommand{\nc}{\newcommand}
\nc{\renc}{\renewcommand}
\nc{\ssec}{\subsection}
\nc{\sssec}{\subsubsection}
\nc{\on}{\operatorname}
\nc{\term}[1]{#1\xspace}
\DeclareMathSymbol{A}{\mathalpha}{operators}{`A}
\DeclareMathSymbol{B}{\mathalpha}{operators}{`B}
\DeclareMathSymbol{C}{\mathalpha}{operators}{`C}
\DeclareMathSymbol{D}{\mathalpha}{operators}{`D}
\DeclareMathSymbol{E}{\mathalpha}{operators}{`E}
\DeclareMathSymbol{F}{\mathalpha}{operators}{`F}
\DeclareMathSymbol{G}{\mathalpha}{operators}{`G}
\DeclareMathSymbol{H}{\mathalpha}{operators}{`H}
\DeclareMathSymbol{I}{\mathalpha}{operators}{`I}
\DeclareMathSymbol{J}{\mathalpha}{operators}{`J}
\DeclareMathSymbol{K}{\mathalpha}{operators}{`K}
\DeclareMathSymbol{L}{\mathalpha}{operators}{`L}
\DeclareMathSymbol{M}{\mathalpha}{operators}{`M}
\DeclareMathSymbol{N}{\mathalpha}{operators}{`N}
\DeclareMathSymbol{O}{\mathalpha}{operators}{`O}
\DeclareMathSymbol{P}{\mathalpha}{operators}{`P}
\DeclareMathSymbol{Q}{\mathalpha}{operators}{`Q}
\DeclareMathSymbol{R}{\mathalpha}{operators}{`R}
\DeclareMathSymbol{S}{\mathalpha}{operators}{`S}
\DeclareMathSymbol{T}{\mathalpha}{operators}{`T}
\DeclareMathSymbol{U}{\mathalpha}{operators}{`U}
\DeclareMathSymbol{V}{\mathalpha}{operators}{`V}
\DeclareMathSymbol{W}{\mathalpha}{operators}{`W}
\DeclareMathSymbol{X}{\mathalpha}{operators}{`X}
\DeclareMathSymbol{Y}{\mathalpha}{operators}{`Y}
\DeclareMathSymbol{Z}{\mathalpha}{operators}{`Z}
\nc{\sA}{\ensuremath{\mathcal{A}}\xspace}
\nc{\sB}{\ensuremath{\mathcal{B}}\xspace}
\nc{\sC}{\ensuremath{\mathcal{C}}\xspace}
\nc{\sD}{\ensuremath{\mathcal{D}}\xspace}
\nc{\sE}{\ensuremath{\mathcal{E}}\xspace}
\nc{\sF}{\ensuremath{\mathcal{F}}\xspace}
\nc{\sG}{\ensuremath{\mathcal{G}}\xspace}
\nc{\sH}{\ensuremath{\mathcal{H}}\xspace}
\nc{\sI}{\ensuremath{\mathcal{I}}\xspace}
\nc{\sJ}{\ensuremath{\mathcal{J}}\xspace}
\nc{\sK}{\ensuremath{\mathcal{K}}\xspace}
\nc{\sL}{\ensuremath{\mathcal{L}}\xspace}
\nc{\sM}{\ensuremath{\mathcal{M}}\xspace}
\nc{\sN}{\ensuremath{\mathcal{N}}\xspace}
\nc{\sO}{\ensuremath{\mathcal{O}}\xspace}
\nc{\sP}{\ensuremath{\mathcal{P}}\xspace}
\nc{\sQ}{\ensuremath{\mathcal{Q}}\xspace}
\nc{\sR}{\ensuremath{\mathcal{R}}\xspace}
\nc{\sS}{\ensuremath{\mathcal{S}}\xspace}
\nc{\sT}{\ensuremath{\mathcal{T}}\xspace}
\nc{\sU}{\ensuremath{\mathcal{U}}\xspace}
\nc{\sV}{\ensuremath{\mathcal{V}}\xspace}
\nc{\sW}{\ensuremath{\mathcal{W}}\xspace}
\nc{\sX}{\ensuremath{\mathcal{X}}\xspace}
\nc{\sY}{\ensuremath{\mathcal{Y}}\xspace}
\nc{\sZ}{\ensuremath{\mathcal{Z}}\xspace}
\nc{\bA}{\ensuremath{\mathbf{A}}\xspace}
\nc{\bB}{\ensuremath{\mathbf{B}}\xspace}
\nc{\bC}{\ensuremath{\mathbf{C}}\xspace}
\nc{\bD}{\ensuremath{\mathbf{D}}\xspace}
\nc{\bE}{\ensuremath{\mathbf{E}}\xspace}
\nc{\bF}{\ensuremath{\mathbf{F}}\xspace}
\nc{\bG}{\ensuremath{\mathbf{G}}\xspace}
\nc{\bH}{\ensuremath{\mathbf{H}}\xspace}
\nc{\bI}{\ensuremath{\mathbf{I}}\xspace}
\nc{\bJ}{\ensuremath{\mathbf{J}}\xspace}
\nc{\bK}{\ensuremath{\mathbf{K}}\xspace}
\nc{\bL}{\ensuremath{\mathbf{L}}\xspace}
\nc{\bM}{\ensuremath{\mathbf{M}}\xspace}
\nc{\bN}{\ensuremath{\mathbf{N}}\xspace}
\nc{\bO}{\ensuremath{\mathbf{O}}\xspace}
\nc{\bP}{\ensuremath{\mathbf{P}}\xspace}
\nc{\bQ}{\ensuremath{\mathbf{Q}}\xspace}
\nc{\bR}{\ensuremath{\mathbf{R}}\xspace}
\nc{\bS}{\ensuremath{\mathbf{S}}\xspace}
\nc{\bT}{\ensuremath{\mathbf{T}}\xspace}
\nc{\bU}{\ensuremath{\mathbf{U}}\xspace}
\nc{\bV}{\ensuremath{\mathbf{V}}\xspace}
\nc{\bW}{\ensuremath{\mathbf{W}}\xspace}
\nc{\bX}{\ensuremath{\mathbf{X}}\xspace}
\nc{\bY}{\ensuremath{\mathbf{Y}}\xspace}
\nc{\bZ}{\ensuremath{\mathbf{Z}}\xspace}
\nc{\dA}{\ensuremath{\mathds{A}}\xspace}
\nc{\dB}{\ensuremath{\mathds{B}}\xspace}
\nc{\dC}{\ensuremath{\mathds{C}}\xspace}
\nc{\dD}{\ensuremath{\mathds{D}}\xspace}
\nc{\dE}{\ensuremath{\mathds{E}}\xspace}
\nc{\dF}{\ensuremath{\mathds{F}}\xspace}
\nc{\dG}{\ensuremath{\mathds{G}}\xspace}
\nc{\dH}{\ensuremath{\mathds{H}}\xspace}
\nc{\dI}{\ensuremath{\mathds{I}}\xspace}
\nc{\dJ}{\ensuremath{\mathds{J}}\xspace}
\nc{\dK}{\ensuremath{\mathds{K}}\xspace}
\nc{\dL}{\ensuremath{\mathds{L}}\xspace}
\nc{\dM}{\ensuremath{\mathds{M}}\xspace}
\nc{\dN}{\ensuremath{\mathds{N}}\xspace}
\nc{\dO}{\ensuremath{\mathds{O}}\xspace}
\nc{\dP}{\ensuremath{\mathds{P}}\xspace}
\nc{\dQ}{\ensuremath{\mathds{Q}}\xspace}
\nc{\dR}{\ensuremath{\mathds{R}}\xspace}
\nc{\dS}{\ensuremath{\mathds{S}}\xspace}
\nc{\dT}{\ensuremath{\mathds{T}}\xspace}
\nc{\dU}{\ensuremath{\mathds{U}}\xspace}
\nc{\dV}{\ensuremath{\mathds{V}}\xspace}
\nc{\dW}{\ensuremath{\mathds{W}}\xspace}
\nc{\dX}{\ensuremath{\mathds{X}}\xspace}
\nc{\dY}{\ensuremath{\mathds{Y}}\xspace}
\nc{\dZ}{\ensuremath{\mathds{Z}}\xspace}
\nc{\bbA}{\ensuremath{\mathbb{A}}\xspace}
\nc{\bbB}{\ensuremath{\mathbb{B}}\xspace}
\nc{\bbC}{\ensuremath{\mathbb{C}}\xspace}
\nc{\bbD}{\ensuremath{\mathbb{D}}\xspace}
\nc{\bbE}{\ensuremath{\mathbb{E}}\xspace}
\nc{\bbF}{\ensuremath{\mathbb{F}}\xspace}
\nc{\bbG}{\ensuremath{\mathbb{G}}\xspace}
\nc{\bbH}{\ensuremath{\mathbb{H}}\xspace}
\nc{\bbI}{\ensuremath{\mathbb{I}}\xspace}
\nc{\bbJ}{\ensuremath{\mathbb{J}}\xspace}
\nc{\bbK}{\ensuremath{\mathbb{K}}\xspace}
\nc{\bbL}{\ensuremath{\mathbb{L}}\xspace}
\nc{\bbM}{\ensuremath{\mathbb{M}}\xspace}
\nc{\bbN}{\ensuremath{\mathbb{N}}\xspace}
\nc{\bbO}{\ensuremath{\mathbb{O}}\xspace}
\nc{\bbP}{\ensuremath{\mathbb{P}}\xspace}
\nc{\bbQ}{\ensuremath{\mathbb{Q}}\xspace}
\nc{\bbR}{\ensuremath{\mathbb{R}}\xspace}
\nc{\bbS}{\ensuremath{\mathbb{S}}\xspace}
\nc{\bbT}{\ensuremath{\mathbb{T}}\xspace}
\nc{\bbU}{\ensuremath{\mathbb{U}}\xspace}
\nc{\bbV}{\ensuremath{\mathbb{V}}\xspace}
\nc{\bbW}{\ensuremath{\mathbb{W}}\xspace}
\nc{\bbX}{\ensuremath{\mathbb{X}}\xspace}
\nc{\bbY}{\ensuremath{\mathbb{Y}}\xspace}
\nc{\bbZ}{\ensuremath{\mathbb{Z}}\xspace}
\nc{\mrm}[1]{\ensuremath{\mathrm{#1}}\xspace}
\nc{\mbf}[1]{\ensuremath{\mathbf{#1}}\xspace}
\nc{\mcal}[1]{\ensuremath{\mathcal{#1}}\xspace}
\nc{\msc}[1]{\ensuremath{\mathscr{#1}}\xspace}
\renc{\bar}[1]{\overline{#1}}
\let\sectsign\S
\let\S\relax
\nc{\sub}{\subset}
\nc{\too}{\longrightarrow}
\nc{\hook}{\hookrightarrow}
\nc*{\hooklongrightarrow}{\ensuremath{\lhook\joinrel\relbar\joinrel\rightarrow}}
\nc{\hooklong}{\hooklongrightarrow}
\nc{\twoheadlongrightarrow}{\relbar\joinrel\twoheadrightarrow}
\nc{\shiso}{\approx}
\nc{\isoto}{\xrightarrow{\sim}}
\nc{\isofrom}{\xleftarrow{\sim}}
\renc{\ge}{\geqslant}
\renc{\le}{\leqslant}
\renc{\geq}{\geqslant}
\renc{\leq}{\leqslant}
\nc{\id}{\mathrm{id}}
\DeclareMathOperator{\Hom}{\mathrm{Hom}}
\nc{\uHom}{\underline{\smash{\Hom}}}
\DeclareMathOperator{\Aut}{\mathrm{Aut}}
\DeclareMathOperator{\End}{\mathrm{End}}
\nc{\Pre}{\mathrm{PSh}{}}
\nc{\uEnd}{\underline{\smash{\End}}}
\renc{\lim}{\operatorname*{lim}}
\nc{\colim}{\operatorname*{colim}}
\nc{\Cofib}{\on{Cofib}}
\nc{\Fib}{\on{Fib}}
\nc{\initial}{\varnothing}
\nc{\op}{\mathrm{op}}
\let\bigcoprod=\coprod
\renc{\coprod}{\sqcup}
\nc{\bDelta}{\mbf{\Delta}}
\nc{\DM}{\mbf{DM}}
\nc{\eff}{\mathrm{eff}}
\nc{\veff}{\mathrm{veff}}
\nc{\cyc}{{\mrm{cyc}}}
\nc{\corr}{{\on{corr}}}
\nc{\fet}{{\mrm{f\acute et}}}
\nc{\fsyn}{{\mrm{fsyn}}}
\nc{\fqs}{{\mrm{fqs}}}
\nc{\syn}{{\mrm{syn}}}
\nc{\Perf}{\mbf{Perf}}
\nc{\Pic}{\mrm{Pic}}
\nc{\perf}{\mrm{perf}}
\nc{\oblv}{\on{oblv}}
\nc{\exact}{\on{exact}}
\nc{\F}{{\mbf{F}}}
\nc{\clopen}{{\mrm{clopen}}}
\nc{\B}{\mrm{B}}
\nc{\Fin}{\on{Fin}}
\nc{\Cut}{\on{Cut}}
\nc{\Cart}{\on{Cart}}
\nc{\pairs}{\mathsf{pairs}}
\nc{\Pairs}{\mathrm{Pair}}
\nc{\Trip}{\mathrm{Trip}}
\nc{\Lab}{\mathrm{Lab}}
\nc{\coCart}{\mathrm{coCart}}
\nc{\RKE}{\mathrm{RKE}}
\nc{\strict}{\mathrm{strict}}
\nc{\Emb}{\mathrm{Emb}}
\nc{\Split}{\mathrm{Split}}
\nc{\Set}{\mathrm{Set}}
\nc{\sSets}{\mathrm{sSets}}
\nc{\pb}{\mathrm{pb}}
\nc{\fib}{\mathrm{fib}}
\nc{\cofib}{\mathrm{cofib}}
\nc{\diff}{\mrm{diff}}
\nc{\gp}{\mrm{gp}}
\nc{\chr}{\mrm{char}}
\nc{\mgp}{\mrm{mot-gp}}
\nc{\FSyn}{\mrm{FSyn}}
\nc{\FEt}{\mrm{FEt}}
\nc{\Spc}{\mrm{Spc}}
\nc{\Ob}{\mrm{Ob}}
\nc{\Spt}{\mrm{Spt}}
\nc{\T}{\bT}
\nc{\suspinf}{\Sigma^\infty}
\nc{\h}{\mrm{h}}
\nc{\uhom}{\underline{\mathrm{Hom}}}
\nc{\umap}{\underline{\mathrm{Maps}}}
\renc{\H}{\bH}
\nc{\Einfty}{{\sE_\infty}}
\nc{\Eone}{{\sE_1}}
\nc{\Stab}{\mrm{Stab}}
\nc{\lax}{{\mrm{lax}}}
\nc{\cocart}{{\mrm{cocart}}}
\nc{\Sch}{\mrm{Sch}}
\nc{\dSch}{\mrm{dSch}}
\nc{\Aff}{\mrm{Aff}}
\nc{\SmAff}{\mrm{SmAff}}
\nc{\dAff}{\mrm{dAff}}
\nc{\Fr}{\on{Fr}}
\nc{\A}{\mathbf{A}}
\nc{\N}{\mathbf{N}}
\nc{\Z}{\mathbf{Z}}
\nc{\Q}{\mathbf{Q}}
\nc{\Oo}{\mathcal{O}} 
\nc{\red}{{\on{red}}}
\nc{\Voev}{{\on{Voev}}}
\nc{\Corr}{\mrm{Corr}}
\nc{\Span}{\mathbf{Corr}}
\nc{\Gap}{\mrm{Gap}}
\nc{\Filt}{\mrm{Filt}}
\nc{\Corrfr}{\Corr^{\fr}}
\nc{\Corrvfr}{\Corr^{\Vfr}}
\nc{\Spec}{\on{Spec}}
\nc{\Sm}{\mrm{Sm}}
\nc{\QSm}{\mrm{QSm}}
\nc{\Gm}{\mathbf{G}_{\mrm{m}}}
\renc{\P}{\bP}
\nc{\nis}{\mathrm{nis}}
\nc{\zar}{\mathrm{zar}}
\nc{\et}{\mathrm{\acute et}}
\nc{\all}{\mathrm{all}}
\nc{\fold}{\mathrm{fold}}
\nc{\Fun}{\mathrm{Fun}}
\nc{\Ho}{\mathrm{Ho}}
\nc{\Segal}{\mathrm{Segal}}
\nc{\Mon}{\mrm{Mon}{}}
\nc{\Ab}{\mrm{Ab}}
\nc{\Gr}{\mrm{Gr}}
\nc{\Sh}{\on{Sh}}
\nc{\M}{\mrm{M}}
\nc{\Lhtp}{L_{\A^1}}
\nc{\Lmot}{L_{\mrm{mot}}}
\nc{\mot}{\mrm{mot}}
\nc{\SH}{\mbf{SH}}
\nc{\RR}{\mbf{R}}
\nc{\CC}{\mbf{C}}
\nc{\Mod}{\mbf{Mod}}
\nc{\QCoh}{\mbf{QCoh}}
\nc{\MonUnit}{\mbf{1}}
\nc{\tr}{\on{tr}}
\nc{\vop}{\mrm{vop}}
\nc{\fr}{{\on{fr}}}
\nc{\Ar}{\mrm{Ar}}
\nc{\Vfr}{\on{Vfr}}
\nc{\frdiff}{{\on{frdiff}}}
\nc{\frGys}{\on{frGys}}
\nc{\SHfr}{\SH^{\fr}}
\nc{\SHfrdiff}{\SH^{\frdiff}}
\nc{\SHfrGys}{\SH^{\frGys}}
\nc{\InftyCat}{\infty\textnormal{-}\mrm{Cat}}
\nc{\TriCat}{\mathrm{TriCat}}
\nc{\Cat}{\mathrm{1\textnormal{-}Cat}}
\nc{\Th}{\on{Th}}
\nc{\CMon}{\mrm{CMon}{}}
\nc{\CAlg}{\mrm{CAlg}{}}
\nc{\KGL}{\mrm{KGL}}
\nc{\MGL}{\mrm{MGL}}
\nc{\MSL}{\mrm{MSL}}
\nc{\MSp}{\mrm{MSp}}
\nc{\Seg}{\mrm{Seg}{}}
\nc{\Tw}{\mrm{Tw}}
\nc{\sslash}{/\mkern-6mu/}
\nc{\PrL}{\mrm{Pr}^\mrm{L}}
\nc{\PrR}{\mrm{Pr}^\mrm{R}}
\nc{\pr}{\mrm{pr}}
\nc{\efr}{\mrm{efr}}
\nc{\nfr}{\mrm{nfr}}
\nc{\dfr}{\mrm{fr}}
\nc{\tfr}{\mrm{tfr}}
\nc{\Vect}{\mrm{Vect}}
\nc{\sVect}{\mrm{sVect}}
\nc{\fix}{\mrm{fix}}
\nc{\Hilb}{\mathrm{Hilb}}
\nc{\flci}{\mathrm{flci}}
\nc{\lci}{\mathrm{lci}}
\nc{\Isom}{\mathrm{Isom}}
\nc{\GL}{\mathrm{GL}}
\nc{\SL}{\mathrm{SL}}
\nc{\Sp}{\mathrm{Sp}}
\nc{\fin}{\mathrm{fin}}
\nc{\cl}{\mathrm{cl}}
\nc{\cn}{\mathrm{cn}}
\nc{\sm}{\mathrm{sm}}
\nc{\heart}{\heartsuit}
\nc{\ornt}{\mrm{or}}
\nc{\GW}{\mrm{GW}}
\nc{\ev}{\mrm{ev}}
\nc{\FET}{\mathcal{FE}\mrm{t}}
\nc{\FSYN}{\mathcal{FS}\mrm{yn}}
\nc{\FQSM}{\mathcal{FQS}\mathrm{m}}
\nc{\fc}{\mrm{fc}{}}
\nc{\tel}{\mrm{tel}{}}
\nc{\lAngle}{\langle\!\langle}
\nc{\rAngle}{\rangle\!\rangle}
\let\phi\varphi
\title{On the infinite loop spaces of algebraic cobordism\\ and the motivic sphere}
\author{Tom Bachmann}
\address{Mathematisches Institut,
LMU M\"unchen,
Theresienstr. 39,
80333 M\"unchen,
Germany} 
\email{tom.bachmann@zoho.com}
\author{Elden Elmanto}
\address{Department of Mathematics,
Harvard University,
1 Oxford St.,
Cambridge, MA 02138,
USA}
\email{elmanto@math.harvard.edu}
\author{Marc Hoyois}
\address{Fakult\"at f\"ur Mathematik,
Universit\"at Regensburg,
Universit\"atsstr. 31,
93040 Regensburg,
Germany}
\email{marc.hoyois@ur.de}
\author{Adeel A. Khan}
\address{IHES,
35 route de Chartres,
91440 Bures-sur-Yvette,
France;
Institute of Mathematics,
Academia Sinica,
Taipei 10617,
Taiwan}
\email{khan@ihes.fr}
\author{Vladimir Sosnilo}
\address{Laboratory ``Modern Algebra and Applications'',
St. Petersburg State University,
14th line, 29B,
199178 Saint Petersburg,
Russia}
\email{vsosnilo@gmail.com}
\author{Maria Yakerson}
\address{Institute for Mathematical Research (FIM),
ETH Z\"urich,
R\"amistr. 101,
8092 Z\"urich,
Switzerland}
\email{maria.yakerson@math.ethz.ch}
\begin{document}


\removeabove{1cm}
\removebetween{0.8cm}
\removebelow{0.8cm}

\maketitle

\begin{prelims}

\DisplayAbstractInEnglish


\DisplayKeyWords


\DisplayMSCclass


\languagesection{Fran\c{c}ais}


\DisplayTitleInFrench


\DisplayAbstractInFrench

\end{prelims}


\newpage

\setcounter{tocdepth}{1}

\tableofcontents


\section{Introduction}
Let $k$ be a field, let $\MGL$ be Voevodsky's algebraic cobordism spectrum over $k$, and let $\FSYN$ be the moduli stack of finite syntomic $k$-schemes. In \cite[Corollary 3.4.2]{EHKSY3}, we showed that there is an equivalence
\[
\Omega^\infty_\T\MGL\simeq L_\mot(\FSYN^\gp),
\]
where $\FSYN^\gp$ is the group completion of $\FSYN$ under disjoint union and $L_\mot$ is the motivic localization functor. 
This can be regarded as an algebro-geometric analogue of the identification of $\Omega^\infty\mathrm{MO}$ with the cobordism space of $0$-dimensional compact smooth manifolds. However, unlike in topology, the group completion is necessary in the above equivalence.
The goal of this paper is to obtain a group-completion-free description of $\Omega^\infty_\T\MGL$, which is more accessible for some purposes. 
Let us write $\FSYN_\infty$ for the colimit of the sequence
\[
\dotsb\to \FSYN_d \xrightarrow{+1} \FSYN_{d+1}\to\dotsb,
\]
where $\FSYN_d\subset \FSYN$ is the moduli stack of finite syntomic $k$-schemes of degree $d$.
Our main result is the following:

\begin{thm}\label{thm:intro}
	Let $k$ be a field. Then there is an equivalence
	\[
	\Omega^\infty_\T\MGL\simeq L_\mot(\Z\times \FSYN_\infty)^+
	\]
	in $\H(k)$, where $+$ denotes Quillen's plus construction (in the $\infty$-topos of Nisnevich sheaves). 
	Moreover, there is an equivalence
	\[
	(\Omega^\infty_\T\MGL)(k) \simeq \Z\times (\Lhtp\FSYN_\infty)(k)^+.
	\]
	If $k$ has positive characteristic, the same equivalences hold without the plus construction.
\end{thm}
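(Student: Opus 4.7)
The plan is to upgrade the equivalence $\Omega^\infty_\T\MGL \simeq L_\mot(\FSYN^\gp)$ of \cite[Corollary~3.4.2]{EHKSY3} by trading the group completion on the right-hand side for a telescope together with a plus construction. Grading by degree exhibits $\FSYN = \bigsqcup_{d \ge 0} \FSYN_d$ as an $\N$-graded $\sE_\infty$-monoid in $\Sh_\nis(\Sm_k)$, with generator $\Spec(k) \in \FSYN_1$; the stabilization map $\FSYN_d \xrightarrow{+1} \FSYN_{d+1}$ appearing in the definition of $\FSYN_\infty$ is precisely multiplication by this generator.

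The main input is a topos-theoretic form of the McDuff--Segal group completion theorem: for an $\N$-graded $\sE_\infty$-monoid $M = \bigsqcup_{d \ge 0} M_d$ in an $\infty$-topos $\sX$ whose generator sits in degree~$1$, there is a natural equivalence
\[
M^\gp \simeq (\Z \times M_\infty)^+,
\]
where $M_\infty = \colim_d M_d$ is formed along multiplication by the generator and $+$ denotes Quillen's plus construction performed inside $\sX$. Applied to $M = \FSYN$ in $\sX = \Sh_\nis(\Sm_k)$ and post-composed with $L_\mot$, this yields the first claim. The topos-theoretic version should be deducible from the classical one by testing on stalks, using that both sides preserve colimits and are computed levelwise.

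For the statement over $\Spec(k)$, the key observation is that $\Spec(k)$ is a point of the Nisnevich topos (since $k$ is Henselian), so evaluation at $k$ is a stalk functor. Stalks commute with all colimits, hence with the plus construction and with $\Lhtp$, and trivialize Nisnevich sheafification. Evaluating the first equivalence at $\Spec(k)$ and simplifying therefore produces $(\Omega^\infty_\T \MGL)(k) \simeq \Z \times (\Lhtp \FSYN_\infty)(k)^+$ directly.

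The main obstacle is the redundancy of the plus construction in positive characteristic. Quillen's plus construction kills the maximal perfect subgroup of $\pi_1$, which in the present case is a priori nontrivial, reflecting the alternating subgroups of the symmetric groups acting on unordered tuples of points. The plan is to exhibit, when $\chr k = p > 0$, a Frobenius-related motivic operation that produces an explicit nullhomotopy of this perfect subgroup after $L_\mot$; this would force the canonical map $L_\mot(\Z \times \FSYN_\infty) \to L_\mot(\Z \times \FSYN_\infty)^+$ to be an equivalence. This characteristic-$p$ step appears to be the most delicate part of the argument and the one most specific to the geometric content of the theorem.
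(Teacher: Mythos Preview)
Your version of the McDuff--Segal theorem is false as stated, and this is where the real content of the theorem hides. For an $\N$-graded $\sE_\infty$-monoid $M$ the map $\tel_1(M)^+\to M^\gp$ is an equivalence only under the additional hypothesis that $\pi_0(\tel_1 M)$ is already a group, equivalently that $M_\infty$ is connected. A counterexample in $\Spc$ is $M=\N\times\N$ graded by the first coordinate: here $\Z\times M_\infty\simeq\Z\times\N$ is discrete, hence unaffected by~$+$, whereas $M^\gp=\Z\times\Z$. (What is always true is that $\tel_1(M)\to M[1^{-1}]$ is acyclic; but $M[1^{-1}]$ need not be $M^\gp$.) So applying your statement to $\FSYN$ in $\Sh_\nis(\Sm_k)$ is not legitimate without further work, and in fact $\pi_0(\FSYN_\infty)$ is not trivial before motivic localization: there are plenty of non-isomorphic finite syntomic $k$-schemes of the same degree that do not become isomorphic after adding points.

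The paper's proof supplies exactly this missing hypothesis, and it is geometric rather than formal. One first applies $L_\mot$ and then proves directly that $L_\mot(\FSYN_\infty)$ is Nisnevich-connected and that $(\Lhtp\FSYN_\infty)(k)$ is connected. This uses a Bertini-type moving lemma: given any $k$-point of a smooth $k$-scheme $X$ and a dense open $U\subset X$, one can find framed correspondences $\alpha,\beta$ supported on finite \'etale schemes mapping into $U$ together with an $\A^1$-homotopy $x+\alpha\sim\beta$. Applied with $X=\Hilb_d^\lci(\A^m)$ and $U$ the \'etale locus, this shows that every finite syntomic $k$-scheme becomes, after adding enough points and passing to $\Lhtp$, cobordant to a finite \'etale one, which in turn is cobordant to its degree. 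Only then does the group-completion theorem apply (to $L_\mot\FSYN$ in $\Sh_\nis$, respectively to $(\Lhtp\FSYN)(k)$ in $\Spc$). Your proposal also glosses over the order of operations: the theorem asserts $L_\mot(\dotsb)^+$ rather than $L_\mot((\dotsb)^+)$, and the $k$-points statement is not obtained by evaluating the sheaf statement but by rerunning the argument in $\Spc$, since $L_\mot$ is not simply $\Lhtp$ on sections over a field.

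For the positive-characteristic claim your instinct is correct but the mechanism is concrete: the polynomial $x(x-t)\cdots(x-(p-1)t)$ over $\F_p[t]$ gives an explicit $C_p$-equivariant framed cobordism between $p$ points with nontrivial $C_p$-action and a scheme with trivial $C_p$-action. Together with a Robalo-type criterion (the telescope agrees with the localization once a cyclic permutation becomes homotopic to the identity), this shows the telescope is already an $\sE_\infty$-monoid, and since it is grouplike by the connectivity result above, the plus construction is redundant.
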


Here, $\Lhtp$ is the ``naive'' $\A^1$-localization functor given by the formula
\[
(\Lhtp\sF)(X) = \colim_{n\in\Delta^\op}\sF(X\times\A^n).
\]

We recall that $\FSYN_d$ is motivically equivalent to the Hilbert scheme 
\[
\Hilb_d^\lci(\A^\infty) = \colim_{n\to\infty} \Hilb_d^\lci(\A^n)
\]
of finite local complete intersections of degree $d$ in $\A^\infty$, which is a smooth ind-scheme \cite[Lemma~3.5.1]{EHKSY3}. 
Taking the colimit along the obvious closed immersions 
\[
\Hilb_d^\lci(\A^\infty)\to \Hilb_{d+1}^\lci(\A^1\times \A^\infty) \simeq \Hilb_{d+1}^\lci(\A^\infty),
\]
we obtain a smooth ind-scheme $\Hilb_\infty^\lci(\A^\infty)$ such that the canonical map (forgetting the embedding into $\A^\infty$)
\[
\Hilb_\infty^\lci(\A^\infty) \to \FSYN_\infty
\]
is a motivic equivalence, and in fact an $\A^1$-equivalence on affine schemes. Thus, we can rewrite Theorem~\ref{thm:intro} as follows:

\begin{thm}
	\label{thm:intro-Hilb}
	Let $k$ be a field. Then there is an equivalence
	\[
	\Omega^\infty_\T\MGL\simeq L_\mot(\Z\times \Hilb_\infty^\lci(\A^\infty))^+
	\]
	in $\H(k)$. Moreover, there is an equivalence
	\[
	(\Omega^\infty_\T\MGL)(k) \simeq \Z\times (\Lhtp\Hilb_\infty^\lci(\A^\infty))(k)^+.
	\]
	If $k$ has positive characteristic, the same equivalences hold without the plus construction.
\end{thm}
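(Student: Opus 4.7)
The plan is to deduce both statements formally from \thmref{thm:intro} by transporting along the canonical map
\[
\Hilb_\infty^\lci(\A^\infty) \to \FSYN_\infty,
\]
whose two relevant properties are recalled in the paragraph preceding the statement: it is a motivic equivalence, and moreover an $\A^1$-equivalence on affine schemes. In particular, this proof should contain essentially no new content beyond \thmref{thm:intro} and the already-established comparison between $\FSYN_d$ and $\Hilb_d^\lci(\A^\infty)$; the goal is only to repackage the answer in terms of a smooth ind-scheme, which is geometrically more transparent and convenient for applications.

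For the first equivalence, I would argue as follows. Crossing with the discrete space $\Z$ preserves motivic equivalences, so $\Z \times \Hilb_\infty^\lci(\A^\infty) \to \Z \times \FSYN_\infty$ is again a motivic equivalence, and applying $L_\mot$ therefore yields an equivalence in $\H(k)$. Since Quillen's plus construction is a functor on the $\infty$-topos of Nisnevich sheaves, it preserves this equivalence. Combining with \thmref{thm:intro} gives the desired identification of $\Omega^\infty_\T\MGL$.

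For the second equivalence, I would use the stronger $\A^1$-equivalence on affines. By the sectionwise formula
\[
(\Lhtp\sF)(\Spec k) = \colim_{n \in \Delta^\op} \sF(\A^n),
\]
this evaluation only depends on the values of $\sF$ on the affines $\A^n$, so any $\A^1$-equivalence of presheaves restricted to affines becomes an equivalence after applying $\Lhtp$ and evaluating at $k$. Crossing with $\Z$, applying the plus construction, and invoking \thmref{thm:intro} then yields the claim. The positive characteristic assertions follow by the same transport argument, using the version of \thmref{thm:intro} without the plus construction. There is no real obstacle here: the only pro forma verification needed is that Quillen's $+$ commutes with motivic equivalences and with evaluation at $k$, which is automatic from its functoriality in the ambient $\infty$-topos.
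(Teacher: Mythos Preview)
Your proposal is correct and matches the paper's treatment exactly: the paper does not give a separate proof of this theorem but simply presents it as a reformulation of \thmref{thm:intro} via the stated properties of the comparison map $\Hilb_\infty^\lci(\A^\infty)\to\FSYN_\infty$ (motivic equivalence, and $\A^1$-equivalence on affines). Your closing remark about $+$ ``commuting with evaluation at $k$'' is slightly misphrased---in the second equivalence one evaluates at $k$ \emph{before} applying $+$ in $\Spc$, so no such commutation is needed---but this does not affect the argument.
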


\begin{rem}
	Unfortunately, we do not know if the plus construction can be removed in characteristic zero. As we will see, this is the case if and only if the cyclic permutation of three points in $\FSYN_3(k)$ becomes homotopic to the identity in $(\Lhtp\FSYN_\infty)(k)$.
\end{rem}

We have similar results for the motivic sphere spectrum $\1$ and for the special linear algebraic cobordism spectrum $\MSL$. Recall that a \emph{framing} of a finite syntomic morphism $f\colon Y\to X$ is a trivialization of the cotangent complex $\sL_f$ in $K(Y)$, and an \emph{orientation} of $f$ is a trivialization of the dualizing sheaf $\omega_f=\det(\sL_f)$ in $\Pic(Y)$. We denote by $\FSYN^\fr$ and $\FSYN^\ornt$ the moduli stacks of framed and oriented finite syntomic $k$-schemes, respectively. There are forgetful morphisms
\[
\FSYN^\fr \to \FSYN^\ornt\to \FSYN.
\]
By \cite[Theorem 3.5.18]{EHKSY1} and \cite[Corollary 3.4.4]{EHKSY3}, there are equivalences
\[
\Omega^\infty_\T\1 \simeq L_\mot(\FSYN^{\fr,\gp})\quad\text{and}\quad \Omega^\infty_\T\MSL \simeq L_\mot(\FSYN^{\ornt,\gp}).
\]
Let $h\in \FSYN^\fr(k)$ be the framed finite syntomic $k$-scheme consisting of two $k$-points, one with trivial framing and the other with the opposite framing. We define
\begin{gather*}
\FSYN^\fr_\infty=\colim(\dotsb\to \FSYN_{2d}^\fr \xrightarrow{+h} \FSYN_{2d+2}^\fr\to\dotsb),\\
\FSYN^\ornt_\infty=\colim(\dotsb\to \FSYN_{2d}^\ornt \xrightarrow{+h} \FSYN_{2d+2}^\ornt\to\dotsb).
\end{gather*}

\begin{thm}\label{thm:intro2}
	Let $k$ be a field. Then there are equivalences
	\begin{gather*}
	\Omega^\infty_\T\1\simeq L_\mot(\Z\times \FSYN_\infty^\fr)^+, \\
	\Omega^\infty_\T\MSL\simeq L_\mot(\Z\times \FSYN_\infty^\ornt)^+
	\end{gather*}
	in $\H(k)$. Moreover, there are equivalences
	\begin{gather*}
	(\Omega^\infty_\T\1)(k) \simeq \Z\times (\Lhtp\FSYN_\infty^\fr)(k)^+,\\
	(\Omega^\infty_\T\MSL)(k) \simeq \Z\times (\Lhtp\FSYN_\infty^\ornt)(k)^+.
	\end{gather*}
	If $k$ has positive characteristic, the same equivalences hold without the plus construction.
\end{thm}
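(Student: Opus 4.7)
The strategy is to adapt the proof of \thmref{thm:intro} to the framed and oriented contexts, treating $\1$ and $\MSL$ in parallel. Starting from the equivalences $\Omega^\infty_\T\1 \simeq L_\mot(\FSYN^{\fr,\gp})$ and $\Omega^\infty_\T\MSL \simeq L_\mot(\FSYN^{\ornt,\gp})$ recalled above, the task reduces to producing, for $\sF \in \{\FSYN^\fr, \FSYN^\ornt\}$, a natural equivalence
\[
L_\mot(\sF^\gp) \simeq L_\mot(\Z\times \sF_\infty)^+
\]
in $\H(k)$, with $\sF_\infty$ the $+h$-telescope defined above. Both cases will be handled uniformly.

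My first step is to extract from the proof of \thmref{thm:intro} the underlying abstract input -- presumably a version of the McDuff--Segal/Quillen group completion theorem applicable to $E_\infty$-monoids in the $\infty$-topos of Nisnevich sheaves: for such a monoid $M$ together with an element $m \in \pi_0 M(k)$ satisfying a suitable cofinality hypothesis, one has $M^\gp \simeq \Z\times M_\infty^+$, where $M_\infty$ is formed by translating by $+m$. For \thmref{thm:intro} this is applied to $M = \FSYN$ with $m$ a single free point. Here one applies it instead with $M = \FSYN^\fr$ or $M = \FSYN^\ornt$, and $m = h$. The reason a single framed point does not suffice, and $h$ must be used instead, is that such a point depends on a choice of framing (or orientation) of $\Spec k$ and is not canonical, whereas $h$ is -- it is invariant under the switch-of-sign involution on framings -- and its class $\langle 1 \rangle + \langle -1 \rangle$ in $\GW(k) = \pi_0\Omega^\infty_\T\1(k)$ is the natural degree-$2$ generator that makes the telescope exhaust all components after group completion.

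The principal technical obstacle I foresee is verifying this cofinality, i.e.\ that $+h$ acts invertibly on the appropriate homology of $L_\mot\sF^\gp$. In the framed case this should reduce to a statement about stabilization in framed bordism combined with the identification $\pi_0\Omega^\infty_\T\1 = \underline{\GW}$ via Morel; in the oriented case one uses the analogous identification for $\MSL$. Alternatively, one can bypass the $H_*$-invertibility hypothesis via the modern formulation of group completion for $E_\infty$-monoids, verifying instead that multiplication by $h$ generates $\pi_0(\sF^\gp)$ modulo the image of $\pi_0(\sF)$ in the appropriate sheaf-theoretic sense. A complication specific to $h$ (as opposed to the degree-$1$ stabilization used for $\MGL$) is that it advances degree by $2$, so some bookkeeping is required to match the $\Z$-factor with the actual $h$-degree.

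The second equivalence of the theorem -- the identification of $(\Omega^\infty_\T\1)(k)$ and $(\Omega^\infty_\T\MSL)(k)$ with their $\Lhtp$-versions -- follows by evaluating the first equivalence at $\Spec k$ and using that $\Lhtp$ computes sections correctly on the affine Hilbert-scheme presentation of $\sF_\infty$, in analogy with the discussion of $\Hilb_\infty^\lci(\A^\infty) \to \FSYN_\infty$ preceding \thmref{thm:intro-Hilb}. Finally, the removal of the plus construction in positive characteristic should follow from the same perfectness argument used for $\MGL$ in \thmref{thm:intro}: once the relevant fundamental groups are known to be perfect (a feature of the Hilbert schemes that should be inherited by the framed and oriented variants, since forgetting the framing or orientation is a surjection on $\pi_1$), the plus construction becomes an equivalence.
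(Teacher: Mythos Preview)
Your overall strategy is right: reduce to an analogue of Corollary~\ref{cor:main} for $\FSYN^\fr$ and $\FSYN^\ornt$ via the group completion theorem (Corollary~\ref{cor:group completion}), then handle positive characteristic separately. But there are two genuine gaps.

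\textbf{The cofinality step is where all the content lies, and your sketch misses the key ingredients.} The hypothesis of Corollary~\ref{cor:group completion} is that $\pi_0$ of the $h$-telescope is already a group. In the paper this is \emph{not} read off from $\pi_0\Omega^\infty_\T\1=\underline\GW$ alone; that identification only tells you about the group completion, not the telescope before completion. The paper proves it by (i) applying the moving lemma (Proposition~\ref{prop:moving}) to the framed Hilbert scheme $\Hilb^\fr_d(\A^m)$ to reduce an arbitrary framed finite syntomic $k$-scheme to a framed finite \emph{\'etale} one, and then (ii) showing that each $(\Spec L,\langle a\rangle)$ is invertible in the $h$-telescope. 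Step (ii) needs the relation $\langle a\rangle+\langle -a\rangle\sim h$ to hold in $\Lhtp\FSYN^\fr$ \emph{prior to group completion} (Lemma~\ref{lem:GW-relation}), which is not obvious; and to push this through for $\pi_0^\nis$ one also needs that $\underline\GW(R)$ is generated by units $\langle a\rangle$ for local $R$ (Lemma~\ref{lem:GW-generators}), requiring separate work in characteristic~$2$. Your proposal does not anticipate any of these steps. Also, the $k$-points statement does not follow by evaluating the $L_\mot$ equivalence at $\Spec k$: sections of $L_\mot$ over a field are not the same as sections of $\Lhtp$. The paper instead proves the $\Lhtp$ statement directly (it is in fact the input to the $L_\mot$ statement, via the surjection $\pi_0\Lhtp\to\pi_0L_\mot$).

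\textbf{Your positive-characteristic argument is backwards.} The plus construction is an equivalence precisely when the fundamental groups are \emph{hypoabelian}, i.e., have no nontrivial perfect subgroups; perfect fundamental groups are exactly the case where the plus construction changes something. The paper's argument is entirely different: over $\F_p$ one exhibits an explicit $C_p$-equivariant framed cobordism showing that the cyclic $C_p$-action on $p$ points becomes trivial in $\Lhtp\FSYN^\fr$ (Lemma~\ref{lem:cyclic}). By Robalo's criterion (Proposition~\ref{prop:robalo}), this forces the $h$-telescope to coincide with the $\Einfty$-localization, hence to be a grouplike $\Einfty$-space, on which the plus construction is the identity.
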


Finally, we also have a Hilbert scheme version of Theorem~\ref{thm:intro2}: the moduli stacks $\FSYN^\fr_d$ and $\FSYN^\ornt_d$ are motivically equivalent to the smooth ind-schemes $\Hilb_d^\fr(\A^\infty)$ and $\Hilb_d^\ornt(\A^\infty)$ (see \cite[\sectsign 5.1.6]{EHKSY1} and \cite[\sectsign 3.5]{EHKSY3}). We define $\Hilb_\infty^\fr(\A^\infty)$ and $\Hilb_\infty^\ornt(\A^\infty)$ in an obvious way so as to match the definitions of $\FSYN^\fr_\infty$ and $\FSYN^\ornt_\infty$.

\begin{thm}
	\label{thm:intro2-Hilb}
	Let $k$ be a field. Then there are equivalences
	\begin{gather*}
	\Omega^\infty_\T\1\simeq L_\mot(\Z\times \Hilb_\infty^\fr(\A^\infty))^+, \\
	\Omega^\infty_\T\MSL\simeq L_\mot(\Z\times \Hilb_\infty^\ornt(\A^\infty))^+
	\end{gather*}
	in $\H(k)$. Moreover, there are equivalences
	\begin{gather*}
	(\Omega^\infty_\T\1)(k) \simeq \Z\times (\Lhtp\Hilb_\infty^\fr(\A^\infty))(k)^+,\\
	(\Omega^\infty_\T\MSL)(k) \simeq \Z\times (\Lhtp\Hilb_\infty^\ornt(\A^\infty))(k)^+.
	\end{gather*}
	If $k$ has positive characteristic, the same equivalences hold without the plus construction.
\end{thm}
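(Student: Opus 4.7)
The plan is to deduce Theorem~\ref{thm:intro2-Hilb} from Theorem~\ref{thm:intro2} by transporting everything through the comparison between Hilbert schemes of framed/oriented points and the corresponding moduli stacks, which is already essentially available in the cited references.

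First I would set up, for each $d$, the forgetful maps
\[
\Hilb_d^\fr(\A^\infty)\to \FSYN^\fr_d,\qquad \Hilb_d^\ornt(\A^\infty)\to \FSYN^\ornt_d,
\]
which by \cite[\sectsign 5.1.6]{EHKSY1} and \cite[\sectsign 3.5]{EHKSY3} are motivic equivalences and, more precisely, $\A^1$-equivalences on affine schemes. Next I would verify that the element $h\in \FSYN^\fr(k)$ lifts canonically to a point $\tilde h\in \Hilb^\fr_2(\A^\infty)(k)$ (just pick two distinct $k$-points of $\A^1\subset \A^\infty$ with opposite framings), and that the corresponding stabilization maps $+\tilde h$ and $+h$ fit into a commutative square with the forgetful maps. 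This uses only the definition of the transition maps $\Hilb^\fr_d(\A^\infty)\to \Hilb^\fr_{d+2}(\A^1\times \A^\infty)\simeq \Hilb^\fr_{d+2}(\A^\infty)$ and the compatibility of framings under disjoint union.

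Passing to the sequential colimit then yields motivic equivalences $\Hilb_\infty^\fr(\A^\infty)\to \FSYN^\fr_\infty$ and $\Hilb_\infty^\ornt(\A^\infty)\to \FSYN^\ornt_\infty$, and $\A^1$-equivalences on affine schemes in each case (since $\Lhtp$ and evaluation on affines both commute with filtered colimits). Smashing with $\Z$, applying $L_\mot$, respectively $\Lhtp$ and evaluation at $k$, and finally the plus construction --- which preserves equivalences of Nisnevich sheaves of spaces --- turns each equivalence in Theorem~\ref{thm:intro2} into the corresponding equivalence in Theorem~\ref{thm:intro2-Hilb}.

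The positive characteristic clause is inherited mechanically from Theorem~\ref{thm:intro2} once the above comparison is in place, so there is no additional work there. The main thing to check carefully is the compatibility of the two stabilization systems under the forgetful maps; I expect this to be the only genuine step, and it should amount to unwinding definitions rather than any serious argument.
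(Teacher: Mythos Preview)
Your proposal is correct and matches the paper's approach exactly. The paper does not give a separate proof of Theorem~\ref{thm:intro2-Hilb}; it simply records (just before the statement) that $\FSYN^\fr_d$ and $\FSYN^\ornt_d$ are motivically equivalent to $\Hilb_d^\fr(\A^\infty)$ and $\Hilb_d^\ornt(\A^\infty)$ by the cited references, defines the stabilized Hilbert schemes ``in an obvious way so as to match the definitions of $\FSYN^\fr_\infty$ and $\FSYN^\ornt_\infty$'', and then states the theorem as an immediate reformulation of Theorem~\ref{thm:intro2}---precisely the reduction you outline, only with less detail on the compatibility of the stabilization maps.
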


Our interest in these group-completion-free models is motivated in part by the following conjecture of Mike Hopkins, called the ``Wilson space hypothesis'': 

\begin{conj}[M.\ Hopkins]
	\label{conj:hopkins}
	For every $n\in\Z$, the motive $M(\Omega^{\infty-n}_\T\MGL)\in\DM(k)$ is pure Tate.
\end{conj}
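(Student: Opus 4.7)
The plan is to reduce Hopkins' conjecture, at least in the case $n = 0$, to a cell decomposition statement about the Hilbert schemes appearing in the main theorem, and then bootstrap to general $n$ via ring structure and slice-theoretic techniques. By \thmref{thm:intro-Hilb}, $\Omega^\infty_\T \MGL \simeq L_\mot(\Z \times \Hilb_\infty^\lci(\A^\infty))^+$, so purity of $M(\Omega^\infty_\T\MGL)$ reduces to purity of $M(\Hilb_d^\lci(\A^n))$ compatibly as $d, n \to \infty$, together with a verification that the Quillen plus construction preserves pure Tateness at the level of motives. The latter is automatic in positive characteristic by the second half of \thmref{thm:intro-Hilb}, and in characteristic zero one would attempt to check it after rationalization, or via an explicit analysis of the action of the relevant fundamental group on motivic homology.

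To access $M(\Hilb_d^\lci(\A^n))$, I would use the scaling action of $\Gm$ on $\A^n$. The fixed locus consists of $\Gm$-invariant, hence monomial, finite lci subschemes of length $d$, which form a finite combinatorial set. A motivic Bialynicki-Birula decomposition would then express the motive as a direct sum of Tate twists of attracting-cell motives. The crucial geometric input to be checked is that each attracting cell inside the smooth open $\Hilb_d^\lci(\A^n)$ is an affine space; equivalently, that the graded deformation theory of a monomial lci ideal has free positive-weight tangent and vanishing obstructions in non-positive weights. One must also verify that $\Gm$-equivariant flat degenerations preserve the lci condition, so that the decomposition restricts to the open lci locus. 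Passage to the ind-limits $d, n \to \infty$ should then be straightforward by compatibility of the cells under the stabilization maps.

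For general $n \in \Z$, I would use the $\Einfty$-ring structure on $\MGL$: for $n > 0$, deloop by invoking analogous Hilbert-scheme / framed-correspondence descriptions of $\Sigma^n_\T \MGL$ and repeat the argument; for $n < 0$, iterated loops of an $\Einfty$-space with pure Tate motivic homotopy type should remain pure Tate, and the slice filtration of $\MGL$, whose slices are Tate twists of $H\Z$, serves as a useful consistency check.

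The main obstacle will be the Bialynicki-Birula step restricted to the lci locus. The full Hilbert scheme $\Hilb_d(\A^n)$ is highly singular and fails to be cellular for $n$ large, but the lci locus is smooth; one must genuinely prove unobstructedness of monomial-lci deformations in mixed gradings and rule out negative-weight contributions. Outside of the surface case $n = 2$ and low-degree regimes, where work in the tradition of Ellingsrud-Stromme and Nakajima applies, this is new territory. A secondary obstacle is motivic control of the plus construction in characteristic zero, flagged as open in the remark following \thmref{thm:intro}; a realistic outcome of this strategy may be to reduce Hopkins' conjecture to the conjunction of a Hilbert-scheme cellularity statement and this plus-construction comparison question.
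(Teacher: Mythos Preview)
The statement you are attempting to prove is a \emph{conjecture}: the paper does not prove it, and there is no proof to compare against. What the paper does do is reduce the case $n=0$ to a geometric statement (Corollary~1.7): $M(\Omega^\infty_\T\MGL)$ is pure Tate if and only if $M(\Hilb_\infty^\lci(\A^\infty))$ is pure Tate. Your proposal is thus really an attempt to prove this latter statement, together with an extension to general $n$.

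Two concrete issues. First, your concern about the plus construction is unnecessary: acyclic maps become equivalences after a single suspension, so they are invisible to motives in any characteristic. The paper uses exactly this observation just before Corollary~1.7 to write $M(\Omega^\infty_\T\MGL)\simeq\bigoplus_{d\in\Z} M(\Hilb_\infty^\lci(\A^\infty))$ with no plus construction in sight. Second, and more seriously, your Bia\l{}ynicki-Birula strategy on the finite-dimensional schemes $\Hilb_d^\lci(\A^n)$ cannot work as stated. The paper's Remark following Corollary~1.7 computes
\[
M(\Hilb_3^\lci(\A^2))\simeq \Z\oplus\Z(1)[2]\oplus\Z(2)[4]\oplus\Z(4)[7],
\]
which is \emph{not} pure Tate because of the $\Z(4)[7]$ summand, and asserts more generally that $M(\Hilb^\lci(\A^n))$ fails to be pure Tate for all $2\le n<\infty$. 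So the obstacle you flag (``that $\Gm$-equivariant flat degenerations preserve the lci condition, so that the decomposition restricts to the open lci locus'') is not merely technical; it genuinely fails, and the attracting cells you hope for are not affine spaces inside the lci locus. Any viable approach must exploit the passage to $n=\infty$ in an essential way rather than treating it as a compatibility afterthought; the paper cites \cite{motive-hilb} for the fact that $M(\Hilb_3^\lci(\A^\infty))$ \emph{is} pure Tate, which shows that the limit can repair what fails at each finite stage.
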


The analog of this conjecture for the complex cobordism spectrum $\mathrm{MU}$ is a theorem of Wilson \cite{Wilson}, and the analog in $C_2$-equivariant homotopy theory was recently proved by Hill and Hopkins \cite{HillHopkins}; these results are subsumed by Conjecture~\ref{conj:hopkins} for $k=\mathbf{C}$ and $k=\mathbf{R}$, respectively. Since the plus construction is invisible to motives, Theorem~\ref{thm:intro} implies that
\[
M(\Omega^\infty_\T\MGL) \simeq \bigoplus_{d \in \Z} M(\Hilb^\lci_\infty(\A^\infty)).
\]
We thus obtain the following geometric reformulation of the case $n=0$ of the above conjecture:

\begin{cor}
	Conjecture~\ref{conj:hopkins} holds for $n=0$ if and only if $M(\Hilb^\lci_\infty(\A^\infty))$ is a pure Tate motive.
\end{cor}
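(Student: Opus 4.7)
The plan is to upgrade the displayed equivalence
\[
M(\Omega^\infty_\T\MGL) \simeq \bigoplus_{d \in \Z} M(\Hilb^\lci_\infty(\A^\infty))
\]
noted just above the corollary into the desired ``if and only if'' statement. This displayed equivalence itself follows from Theorem~\ref{thm:intro-Hilb} together with the standard fact that the motivic realization $M\colon \H(k)\to \DM(k)$ is insensitive to Quillen's plus construction. Concretely, $M$ factors through $\suspinf_\T$ and then through motivic homology, and since Quillen's plus is an integral homology equivalence on underlying spaces, its Nisnevich sheafification is inverted by $M$. The factor of $\Z$ contributes a countable direct sum of copies of $M(\Hilb^\lci_\infty(\A^\infty))$ indexed by connected components.

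Granting the decomposition, the corollary is then purely formal. For the ``if'' direction, if $M(\Hilb^\lci_\infty(\A^\infty))$ is pure Tate then so is the $\Z$-indexed direct sum of copies of it, since the full subcategory of pure Tate motives in $\DM(k)$ is closed under direct sums by definition. For the ``only if'' direction, note that $M(\Hilb^\lci_\infty(\A^\infty))$ sits as a retract of the total direct sum via the projection onto any single $\Z$-component; since pure Tate motives are closed under retracts in $\DM(k)$, it follows that $M(\Hilb^\lci_\infty(\A^\infty))$ is pure Tate whenever the sum is.

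No serious obstacle arises, as the real content has been concentrated into Theorem~\ref{thm:intro-Hilb}. The only nontrivial ingredient to invoke is the invariance of $M$ under Quillen's plus construction, which is a standard consequence of the plus being a $\bbZ$-homology equivalence together with the factorization of $M$ through motivic homology.
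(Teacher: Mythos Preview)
Your argument is correct and matches the paper's approach: the corollary is stated in the paper without separate proof, as an immediate consequence of the displayed decomposition $M(\Omega^\infty_\T\MGL)\simeq\bigoplus_{d\in\Z}M(\Hilb^\lci_\infty(\A^\infty))$, which is in turn derived from Theorem~\ref{thm:intro-Hilb} together with the invisibility of the plus construction to motives. Your explicit justification of both directions via closure of pure Tate motives under direct sums and retracts is exactly the intended (and only reasonable) reading.
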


\begin{rem}
	The motive of $\Hilb^\lci(\A^n)$ is \emph{not} pure Tate for $2\leq n<\infty$.
	Indeed, one can show that $M(\Hilb_3^\lci(\A^n))$ cannot be pure Tate by analyzing the restriction to the lci locus of the Białynicki-Birula stratification of the Hilbert scheme $\Hilb_3(\A^n)$, which is smooth.
	For example,
	\[
	M(\Hilb_3^\lci(\A^2)) \simeq \Z \oplus \Z(1)[2] \oplus \Z(2)[4] \oplus \Z(4)[7].
	\]
	Nevertheless, the motive of $\Hilb_3^\lci(\A^\infty)$ turns out to be pure Tate, as shown in \cite[Theorem 7.1]{motive-hilb}.
\end{rem}

\begin{ackn}
We would like to thank Marc Levine for explaining to us the principle of moving via an étale correspondence; this was the main inspiration of Proposition \ref{prop:moving}.

The last five authors would like to acknowledge the hospitality of the Institute for Advanced Study in Princeton, where part of this work was done in July 2019.
\end{ackn}

\section{A moving lemma}

\begin{lem}\label{lem:curve-connectivity}
	Let $k$ be a field, $X$ a smooth geometrically connected quasi-projective $k$-scheme of dimension $\geq 2$, and $A\subset X$ a subscheme étale over $k$. Then there exists a smooth geometrically connected closed subscheme $H\subset X$ of codimension $1$ containing $A$.
\end{lem}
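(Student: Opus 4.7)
The plan is to realise $H$ as a hypersurface section of a projective compactification of $X$. Fix an embedding $X \hook \bar X \hook \P^N$ with $\bar X$ a projective closure of $X$, and for each integer $d \ge 1$ let $V_d \sub H^0(\P^N, \sO(d))$ denote the linear subspace of sections vanishing on $A$. Since $A$ is zero-dimensional and $\sO(1)$ is ample, for $d$ large enough the map from $V_d$ to the space of $1$-jets of sections along $A$ is surjective, which will let us freely adjust the behaviour of a defining equation in an infinitesimal neighbourhood of $A$.

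I would then seek a section $s \in V_d$ whose zero locus $F \sub \P^N$ satisfies: (i) $F \cap X$ is smooth over $k$, including at every point of $A$; and (ii) $F \cap \bar X$ is geometrically integral and no irreducible component of it is contained in the boundary $\bar X \setminus X$. Given such an $s$, the scheme $H := F \cap X$ is smooth, of codimension $1$, contains $A$, and inherits geometric integrality, hence geometric connectedness, from $F \cap \bar X$.

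Over an infinite field, both (i) and (ii) hold generically in $V_d$ for $d \gg 0$. For (i), classical Bertini handles smoothness of $F \cap X$ away from $A$, while smoothness at a point $a \in A$ amounts to the non-vanishing of the differential of $s$ at $a$ on $T_a X$; this is a non-empty open condition on the $1$-jet at $a$ (using $\dim T_a X \ge 2$) and can be realised inside $V_d$ by the surjectivity noted above. For (ii), one invokes Bertini irreducibility for the projective, geometrically integral variety $\bar X$ of dimension $\ge 2$, together with the general position fact that no component of a sufficiently general element of $V_d$ is contained in a fixed proper subscheme of $\bar X$.

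Over a finite field this density argument breaks down, and one must instead appeal to Poonen's Bertini theorem over finite fields with prescribed Taylor coefficients, together with its irreducibility refinement due to Charles--Poonen, to produce such an $s$ for $d \gg 0$. I expect the main obstacle to be precisely this finite-field case: over infinite fields the argument collapses to standard Bertini with basepoints, whereas over $\F_q$ one needs these more refined density theorems (or, as an alternative, one could try to produce $H$ after a finite extension of $k$ and then descend via a Galois-averaging argument, which would itself require an auxiliary geometric construction).
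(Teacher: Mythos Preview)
Your proposal is correct and follows essentially the same route as the paper: classical Bertini with basepoints (Altman--Kleiman) over infinite fields, and Poonen's Bertini with Taylor conditions combined with Charles--Poonen irreducibility over finite fields, with the density-one set from the latter meeting the positive-density set from the former.

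Two minor simplifications relative to your write-up: the passage through a projective closure $\bar X$ is unnecessary, since both Altman--Kleiman's irreducibility theorem and Charles--Poonen apply directly to the quasi-projective $X$; and your clause ``no component contained in the boundary'' is redundant once $F\cap\bar X$ is geometrically irreducible and $A\subset F\cap X$ is nonempty. The paper also packages the $1$-jet condition at $A$ by fixing once and for all a first-order thickening $\tilde A\subset\P^N$ transverse to $X$ and imposing $f|\tilde A=0$, which is equivalent to your jet-surjectivity argument but slightly cleaner to state.
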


\begin{proof}
	If $k$ is infinite, a generic hypersurface section of $X$ of large enough degree containing $A$ is geometrically irreducible and smooth of codimension $1$, by \cite[Theorems 1 and 7]{AltmanKleiman}.
	
	If $k$ is finite, choose an embedding $X\subset \P^n_k$ and a first-order thickening $\tilde A$ of $A$ in $\P^n_k$ that is transverse to $X$. Let $P_d=\Gamma(\P^n_k,\sO(d))$ and let
	\begin{align*}
		Q_d &= \{f\in P_d\;|\; \text{$Z(f)\cap (X-A)$ is smooth of codimension $1$ in $X$ and $f|\tilde A = 0$}\},\\
		R_d &= \{f\in P_d\;|\; \text{$Z(f)\cap X$ is geometrically irreducible}\}.
	\end{align*}
	Note that $Z(f)\cap X$ is smooth for any $f\in Q_d$, so it suffices to show that $Q_d\cap R_d$ is nonempty for some $d$.
	We have
	\[
	\lim_{d\to\infty} \frac{\# Q_d}{\# P_d} > 0 \quad\text{and}\quad \lim_{d\to\infty}\frac{\# R_d}{\# P_d} = 1
	\]
	by \cite[Theorem 1.2]{Poonen} and \cite[Theorem 1.1]{CharlesPoonen}, respectively. Hence, $Q_d\cap R_d$ is nonempty for all large enough $d$.
\end{proof}

\begin{lem}\label{lem:bertini}
	Let $k$ be a field, $X\subset \P^n_k$ a quasi-projective $k$-scheme, and $A,B\subset X$ disjoint finite subschemes.
	Suppose $A$ étale over $k$ and $X-B$ smooth of dimension $m\geq 1$ over $k$. Then, for every large enough $d$, there exist global sections $f,g\in \Gamma(\P^n_k,\sO(d))$ such that:
	\begin{enumerate}
		\item[\rm (1)] $f|B=g|B$;
		\item[\rm (2)] $f|A=0$;
		\item[\rm (3)] $g$ does not vanish on $A\cup B$;
		\item[\rm (4)] $Z(f)\cap X$ and $Z(g)\cap X$ are smooth over $k$ of dimension $m-1$.
	\end{enumerate}
\end{lem}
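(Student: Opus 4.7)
The plan is to produce $f$ and $g$ as generic elements of two affine linear subspaces of $\Gamma(\P^n_k, \sO(d))$ defined by prescribed values on a finite thickening of $A \cup B$, and then apply Bertini-type theorems to ensure (4). Since $A \cap B = \emptyset$ and $X - B$ is smooth, $X$ is smooth of dimension $m$ at each point of $A$. Following the device of Lemma~\ref{lem:curve-connectivity}, I would choose a first-order thickening $\tilde A \subset \P^n_k$ of $A$ transverse to $X$, i.e., with $T_a \tilde A + T_a X = T_a \P^n$ at every $a \in A$. This transversality ensures that for any section vanishing on $\tilde A$ whose differential at $a \in A$ is nonzero in the conormal of $\tilde A$ in $\P^n$, the restriction of this differential to $T_a X$ is again nonzero, so its zero locus cuts $X$ smoothly of codimension $1$ at $a$. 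For $d$ at least the Castelnuovo--Mumford regularity of $\tilde A \cup B$, the restriction map $\Gamma(\P^n_k, \sO(d)) \to \Gamma(\tilde A \cup B, \sO(d))$ is surjective.

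Next I would fix nowhere vanishing sections $h_A \in \Gamma(A, \sO(d))$ and $h_B \in \Gamma(B, \sO(d))$ and introduce the nonempty affine subspaces
\begin{align*}
V_f &= \{ f \in \Gamma(\P^n_k, \sO(d)) : f|_{\tilde A} = 0,\ f|_B = h_B \}, \\
V_g &= \{ g \in \Gamma(\P^n_k, \sO(d)) : g|_A = h_A,\ g|_B = h_B \}.
\end{align*}
By construction, every $f \in V_f$ satisfies (2); every $g \in V_g$ satisfies (3), since $g|_{A \cup B}$ is nowhere zero; and any pair $(f, g) \in V_f \times V_g$ satisfies (1), as both restrict to $h_B$ on $B$.

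To obtain (4), I would apply the Altman--Kleiman Bertini theorem with imposed conditions \cite[Theorems 1 and 7]{AltmanKleiman} when $k$ is infinite, and Poonen's Bertini with Taylor conditions \cite[Theorem 1.2]{Poonen} when $k$ is finite, to each of the affine systems $V_f$ and $V_g$. For $g$: because $g|_{A\cup B}$ is nowhere zero, $Z(g) \cap X$ avoids $A \cup B$ and hence lies in the smooth locus $X - B$, where both theorems yield generic (respectively positive-density) smoothness of codimension $1$. For $f$: the same theorems give smoothness of $Z(f) \cap X$ on $X - A$, while at each point $a \in A$ the transversality of $\tilde A$ combined with the generic nonvanishing of the $\tilde A$-derivative of $f$ forces $Z(f) \cap X$ to be smooth of codimension $1$ at $a$ by the argument recalled above. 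Taking such $f$ and $g$, which exist for all sufficiently large $d$, completes the construction.

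The main obstacle is the smoothness of $Z(f) \cap X$ at the imposed base locus $A$: since $f$ must vanish on $A$, smoothness there requires the differential of $f$ at each $a \in A$ to be nonzero in the cotangent space of $X$, not merely of $\P^n$. This is precisely the subtlety addressed in Lemma~\ref{lem:curve-connectivity} and is resolved by the transversality of $\tilde A$; translating this into a Bertini-with-Taylor-conditions statement at each point of $A$ is standard in both the infinite-field and finite-field setups.
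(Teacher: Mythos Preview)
Your argument is correct and shares the paper's core ingredients (Poonen's Bertini with Taylor conditions over finite fields, Altman--Kleiman over infinite fields, and the transverse thickening $\tilde A$ to force smoothness of $Z(f)\cap X$ at the imposed base locus $A$). The organization differs in two ways worth noting.

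First, over an infinite field the paper does not fix values on $B$ at all. Instead it takes $f$ generic with $f|A=0$, then sets $g=f+h$ where $h$ is an independent generic section with $h|B=0$; the vanishing of $H^1(\P^n_k,\sI_{A\cup B}(d))$ ensures that $g=f+h$ is a generic section of the \emph{full} linear system $\Gamma(\P^n_k,\sO(d))$, so condition~(3) and smoothness of $Z(g)\cap(X-B)$ follow directly from the classical Bertini theorem without any appeal to affine linear systems. Your route instead prescribes $g|_A=h_A$ and $g|_B=h_B$, so you are implicitly invoking Bertini for an affine pencil $V_g$; this is of course true (the usual incidence-variety argument goes through unchanged once the associated linear system $\sI_{A\cup B}(d)$ is base-point-free on $X-(A\cup B)$), but it is not literally the statement of \cite[Theorem~7]{AltmanKleiman}, which concerns sections vanishing on a subscheme rather than taking a prescribed nonzero value.

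Second, the paper only uses the thickening $\tilde A$ in the finite-field case; over an infinite field, \cite[Theorem~7]{AltmanKleiman} already guarantees that a generic hypersurface through the \'etale subscheme $A$ meets $X$ smoothly, so no auxiliary transversality argument is needed. Your uniform treatment trades this small simplification for not having to split into cases.
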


\begin{proof}
	Suppose first that $k$ is finite. Let $h\in \Gamma(A\cup B,\sO(1))$ be a nonvanishing section. Applying \cite[Theorem 1.2]{Poonen} with an auxiliary thickening of $A$ as in the proof of Lemma~\ref{lem:curve-connectivity}, we deduce that for every large enough $d$ there exists a global section $f$ of $\sO(d)$ satisfying conditions (2) and (4) such that $f|B=h^{\otimes d}|B$. By the same theorem, there also exists a global section $g$ of $\sO(d)$ satisfying condition (4) such that $g|(A\cup B) = h^{\otimes d}$. Then $f$ and $g$ satisfy all conditions.
	
	If $k$ is infinite, take $d$ such that $\sI_A(d-1)$ is generated by its global sections and such that the vanishing of $H^1(\P^n_k,\sI_{A\cup B}(d))$ holds, where $\sI_Z\subset \sO_{\P^n_k}$ denotes the ideal defining a closed subscheme $Z$.
	Let $f$ and $h$ be generic sections of $\sO(d)$ with $f|A=0$ and $h|B=0$, and let $g=f+h$.
	By choice of $d$, every global section of $\sO(d)$ is the sum of one vanishing on $A$ and one vanishing on $B$. Hence, $g$ is a generic section of $\sO(d)$, and in particular does not vanish on $A\cup B$.
	Moreover, $Z(f)\cap (X-B)$ and $Z(g)\cap (X-B)$ are smooth of dimension $m-1$ by \cite[Theorem 7]{AltmanKleiman}. The sections $f$ and $g$ thus satisfy conditions (1)–(4).
\end{proof}

\begin{prop}\label{prop:moving}
	Let $k$ be a field, $X$ a smooth $k$-scheme, and $U\subset X$ a dense open subscheme. For any $k$-point $x\colon \Spec k\to X$, there exist framed correspondences $\alpha,\beta\in \Corr^\fr_k(\Spec k, U)$ with finite étale support and an $\A^1$-homotopy $x+\alpha\sim\beta$ in $\Corr^\fr_k(\A^1_k,X)$.
\end{prop}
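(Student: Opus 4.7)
The approach is to produce a smooth projective curve containing a lift of $x$ and connect $x$ to points of $U$ by a linear pencil of degree-$d$ hypersurface sections, combining the two preceding Bertini-style lemmas. The étale correspondence in the statement will come from restricting the pencil to fibers at $t = 0$ and $t = 1$, and the intermediate family will be the $\A^1$-homotopy.

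First, I would iterate \lemref{lem:curve-connectivity} inside $X$, starting with $A = \{x\}$, to obtain a smooth geometrically connected closed curve $C \subset X$ through $x$; a generic choice at each step (or the Poonen-density argument in the finite-field case) ensures $C$ is transverse to $X \setminus U$, so $C \setminus U$ is finite. Fixing a quasi-projective embedding $X \hookrightarrow \P^N_k$ with projective closure $\bar X$, let $\bar C \subset \bar X$ be the closure of $C$ and $\nu\colon \tilde C \to \bar C$ its normalization; then $\tilde C$ is a smooth projective curve and $\nu$ restricts to an isomorphism $\tilde C' := \nu^{-1}(C) \isoto C$ since $C$ is smooth. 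Write $\tilde x = \nu^{-1}(x)$ and fix a closed embedding $\tilde C \hookrightarrow \P^M_k$. Next, apply \lemref{lem:bertini} to $\tilde C \subset \P^M_k$ with $A = \{\tilde x\}$ and $B = \bigl((\tilde C \setminus \tilde C') \cup \nu^{-1}(C \setminus U)\bigr) \setminus \{\tilde x\}$; this $B$ is a finite subscheme disjoint from $A$ with $\tilde C \setminus B$ smooth, so for $d \gg 0$ the lemma yields sections $f, g \in \Gamma(\P^M_k, \sO(d))$ satisfying (1)--(4).

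Set $F = (1-t)f + tg$ as a section on $\A^1_k \times \P^M_k$ and $Z = Z(F) \cap (\A^1_k \times \tilde C)$. By conditions (1) and (3), $F$ restricts on $\A^1_k \times B$ to the nowhere-vanishing $f|_B = g|_B$, so $Z \subset \A^1_k \times \tilde C' \cong \A^1_k \times C \subset \A^1_k \times X$. By (2) and (3), $f|_{\tilde C}$ and $g|_{\tilde C}$ are linearly independent, hence $F_t|_{\tilde C}$ is a nonzero section of $\sO(d)|_{\tilde C}$ for every $t$, so $Z \to \A^1_k$ has $0$-dimensional fibers. Combined with the properness of $\tilde C$ and the lci structure of $Z$ as a Cartier divisor in the smooth surface $\A^1_k \times \tilde C$, this makes $Z \to \A^1_k$ finite syntomic. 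Condition (4) then gives that $Z_0 = Z(f) \cap \tilde C = \{\tilde x\} \sqcup \alpha$ and $Z_1 = Z(g) \cap \tilde C =: \beta$ are étale over $k$, with $\alpha, \beta$ supported in $C \cap U \subset U$.

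Finally, I would equip $Z$, $\alpha$, $\beta$ with compatible framings. The étale $\alpha$ and $\beta$ carry canonical framings since their cotangent complexes vanish. For $Z$, the conormal sequence gives $[\sL_{Z/\A^1_k}] = [\Omega_{\tilde C/k}|_Z] - [\sO(-d)|_Z]$ in $K(Z)$, a virtual rank-$0$ difference of line bundles which may be trivialized after a suitable stabilization using the embedding $\tilde C \hookrightarrow \P^M_k$. The main technical hurdle is the bookkeeping of framings: verifying that the induced framing of $Z$ on $Z_0$ decomposes as the standard framing of the $k$-point $\{\tilde x\}$ (so that summand matches $x \in \Corr^\fr_k(\Spec k, X)$) together with the canonical framing of $\alpha$, and on $Z_1$ restricts to the canonical framing of $\beta$. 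Once this framing compatibility is arranged, $Z$ with its framing is the sought $\A^1$-homotopy $x + \alpha \sim \beta$ in $\Corr^\fr_k(\A^1_k, X)$.
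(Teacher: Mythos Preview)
Your overall strategy matches the paper's: reduce to a curve $C$ through $x$ via \lemref{lem:curve-connectivity}, then use \lemref{lem:bertini} to build a linear pencil $Z\to\A^1$ interpolating between $\{x\}\sqcup\alpha$ and $\beta$. The construction of $Z$ and the verification that $Z\to\A^1$ is finite syntomic are fine (modulo the minor omission of first passing to a quasi-projective neighborhood of $x$, which is needed even to invoke \lemref{lem:curve-connectivity}). The genuine gap is the framing, and your proposed fix does not work: the class $[\Omega_{\tilde C/k}|_Z]-[\sO(-d)|_Z]\in K_0(Z)$ is a difference of two line bundles on a one-dimensional affine scheme, and there is no reason for it to vanish; no ``stabilization using the embedding $\tilde C\hookrightarrow\P^M$'' produces a trivialization of this class.

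The paper's resolution is to \emph{shrink $C$ before applying} \lemref{lem:bertini}: replace $C$ by an affine open neighborhood of $x$ in $C$ on which both $\Omega_{C/k}$ and the ample line bundle $\sL$ are already trivial (possible since both are line bundles on a smooth curve). Then $Z\subset\A^1\times C$, and the chosen trivializations pull back to give a specific framing $\tau\colon\sL_{Z/\A^1}\simeq 0$ in $K(Z)$ with no further work. Your worry about compatibility at $\alpha$ and $\beta$ is misplaced: simply \emph{define} their framings by restricting $\tau$ (the statement does not demand the canonical étale framings). The only real compatibility is at $\{x\}$: the restriction $\tau|_{\{x\}}$ is a loop in $K(k)$, hence a unit in $K_1(k)=k^\times$, and one corrects $\tau$ globally by multiplying by the inverse of this unit in $k^\times\subset\sO(Z)^\times$.
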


\begin{proof}
	Since the question is local around $x$, we can assume $X$ quasi-projective and geometrically connected \cite[corollaire 4.5.14]{EGA4-2}.
 By Lemma~\ref{lem:curve-connectivity}, we can find a smooth connected subscheme $C\subset X$ of dimension $1$ such that $x\in C$ and such that $C\cap U$ is nonempty (hence cofinite in $C$).
 Let $\bar C$ be a projective closure of $C$ and $\sL$ an ample invertible sheaf on $\bar C$. Shrinking $C$ if necessary, we may assume that $C$ is affine and that $\Omega_{C/k}$ and $\sL|C$ are trivial.
 Let $A=\{x\}$, and let $B$ be the complement of $C\cap U$ in $\bar C$ with $x$ removed. Then $A$ and $B$ are disjoint finite subschemes of $\bar C$ such that $A$ is étale and $\bar C-B$ is smooth.
  By Lemma~\ref{lem:bertini}, we can find an integer $d$ and global sections $f$ and $g$ of $\sL^{\otimes d}$ with étale vanishing loci, such that $f(x)=0$, $f=g$ on $B$, and $Z(f)-\{x\},Z(g)\subset C\cap U$. Let $h$ be the section $(1-t)f+tg$ over $\A^1\times\bar C$ and let $H=Z(h)$. Then $H$ is a proper local complete intersection of relative virtual dimension $0$ over $\A^1_k$. Moreover, since $f$ and $g$ agree and do not vanish on $B$, $h$ does not vanish on $\A^1\times B$, so $H\subset \A^1\times C$ and $H$ is finite over $\A^1_k$ (hence syntomic by \cite[Proposition 2.1.16]{EHKSY1}). Thus, $H$ defines a finite syntomic correspondence from $\A^1_k$ to $X$. Since $\Omega_{C/k}$ and $\sL^{\otimes d}|C$ are trivial, there exists a framing $\tau\colon \sL_{H/\A^1_k}\simeq 0$ in $K(H)$. Multiplying $\tau$ by a unit of $k$, we can assume that $\tau$ restricts to the trivial loop in $K(\{x\})$. Then $(H,\tau)$ is a framed correspondence from $\A^1_k$ to $X$ with the desired properties.
\end{proof}

\section{Quillen's plus construction and group completion}
\label{sec:plus}

We shall say that a morphism $f\colon X\to Y$ in an $\infty$-topos $T$ is \emph{acyclic} if it is an epimorphism in the categorical sense, \textit{i.e.}, if the square
\[
\begin{tikzcd}
	X \ar{r}{f} \ar{d}[swap]{f} & Y \ar{d}{\id} \\
	Y \ar{r}[swap]{\id} & Y
\end{tikzcd}
\]
is cocartesian.
Recall that a \emph{modality} on $T$ is a factorization system on $T$ (in the sense of Joyal and as explained in \cite[\sectsign 5.2.8]{HTT}) whose left class of morphisms is stable under base change \cite[\sectsign 2]{ABFJ}.

\begin{lem}\label{lem:modality}
	In any $\infty$-topos $T$, acyclic morphisms form the left class of a modality.
	In particular, acyclic morphisms are stable under composition, retracts, colimits, cobase change, base change, and finite products.
\end{lem}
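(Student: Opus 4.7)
The plan is to apply the criterion of Anel--Biedermann--Finster--Joyal \cite[\sectsign 2]{ABFJ}: a class of maps in an $\infty$-topos that is stable under pullback and closed under colimits in the arrow category is the left class of a (unique) modality. Granted this, all the listed closure properties follow formally: composition, retracts, and cobase change are automatic for the left class of any orthogonal factorization system; base change is the defining property of a modality; and finite products follow from base change along projections together with composition, via writing $f_1 \times f_2 \simeq (f_1 \times \id) \circ (\id \times f_2)$.

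The heart of the argument is pullback stability, which I would establish using universality of colimits in $T$. If $f \colon X \to Y$ is acyclic and $g \colon Z \to Y$ is arbitrary, then pulling back the defining cocartesian square along $g$ and using that colimits in an $\infty$-topos are universal yields
\[
Z \sqcup_{X \times_Y Z} Z \simeq (Y \sqcup_X Y) \times_Y Z \simeq Y \times_Y Z \simeq Z,
\]
where the middle equivalence uses that $f$ is acyclic. Hence the base change $X \times_Y Z \to Z$ is acyclic.

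Closure under colimits in the arrow category is equally formal: if $f \colon X \to Y$ is the colimit of acyclic maps $f_i \colon X_i \to Y_i$, then since pushouts commute with colimits,
\[
Y \sqcup_X Y \simeq \colim_i (Y_i \sqcup_{X_i} Y_i) \simeq \colim_i Y_i \simeq Y.
\]
These two inputs suffice to invoke the ABFJ criterion and yield the modality.

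The main obstacle is not in these computations, which are routine, but in the construction of the $(\mathcal{L}, \mathcal{R})$-factorization of an arbitrary map, where $\mathcal{L}$ is the class of acyclic maps and $\mathcal{R}$ its right orthogonal. This is handled inside the ABFJ machinery by a transfinite pushout construction that ``acyclifies'' $f$ via iterated codiagonals, with convergence guaranteed by the presentability of $T$; I would appeal to \cite{ABFJ} for this step rather than reproduce it.
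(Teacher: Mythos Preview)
Your argument is correct, and the treatment of base change stability via universality of colimits is identical to the paper's. The difference lies in how the existence of the factorization system is established. You verify that acyclic maps are closed under colimits in the arrow category and then invoke an ABFJ-style criterion, deferring the actual factorization construction (and its convergence) to that reference. The paper instead appeals to \cite[Proposition~5.5.5.7]{HTT}, which requires showing that the class of acyclic maps is of \emph{small generation} as a saturated class; it does this by observing that, for each object $X$, the acyclic maps with target $X$ are the fiber of the suspension functor $T_{/X}\to T_{/X}$ over the terminal object, hence form an accessible $\infty$-category, and then assembling small generating sets over a set of generators of $T$ using base change stability. Your ``iterated codiagonals'' sketch and appeal to presentability are gesturing at the same set-theoretic input, but the paper's accessibility argument is what actually pins down the small generating set and is the one non-formal ingredient in the proof; your route hides this step inside the cited machinery rather than isolating it.
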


\begin{proof}
	Acyclic morphisms are stable under base change since cocartesian squares are (by universality of colimits).
	By \cite[Proposition 5.5.5.7]{HTT}, it remains to show that the class of acyclic maps is of small generation as a saturated class.
	For any $X\in T$, the full subcategory of $T_{/X}$ spanned by the acyclic morphisms is accessible by \cite[Proposition 5.4.6.6]{HTT}, being the fiber of the suspension functor. It is thus generated under filtered colimits by a small subcategory. Let $C\subset \Fun(\Delta^1,T)$  be the union of these small subcategories of $T_{/X}$ as $X$ ranges over a small set of generators of $T$. Using that acyclic maps are stable under base change, we immediately deduce that $C$ generates the class of acyclic maps under colimits.
\end{proof}

For $X\in T$, we denote by $X\to X^+$ the final object in the $\infty$-category of acyclic maps out of $X$, which exists by Lemma~\ref{lem:modality}. The functor $X\mapsto X^+$ is called the \emph{plus construction}.
Note that acyclic morphisms are connected and become equivalences after a single suspension; it follows that the canonical map $X\to X^+$ induces an isomorphism on $\pi_0$, and that it is an equivalence whenever $X$ admits a structure of $\sE_1$-group (\textit{i.e.}, when $X$ is a loop space \cite[Theorem 5.2.6.15]{HA}).

In the $\infty$-topos $\Spc$ of spaces, the above construction coincides with Quillen's plus construction which was used to define algebraic $K$-theory \cite[\sectsign 12]{quillen-icm}. More precisely, $X\to X^+$ is the initial map that kills the maximal perfect subgroups of the fundamental groups of $X$, and hence it is an equivalence if and only if the fundamental groups of $X$ are hypoabelian (\textit{i.e.}, have no nontrivial perfect subgroups). We refer to \cite{Raptis} for a discussion of acyclic morphisms in $\Spc$ and for a proof of this fact.

Let $M$ be a commutative monoid in $T$ and let $m\colon *\to M$ be a global section. We denote by $\Mod_M(T)$ the $\infty$-category of $M$-modules, \textit{i.e.}, objects of $T$ with an action of $M$ (which is again an $\infty$-topos). The full subcategory of $M$-modules on which $m$ acts invertibly is reflective, and we denote by $E\mapsto E[m^{-1}]$ the associated localization functor. It is easy to check that this functor preserves finite products. In particular, $M[m^{-1}]$ is again a commutative monoid and $M\to M[m^{-1}]$ is the initial morphism of commutative monoids sending $m$ to an invertible global section. For $E\in \Mod_M(T)$, we define the telescope $\tel_m(E) \in \Mod_M(T)$ as the colimit of the sequence
\[
\dotsb \to E \xrightarrow{\cdot m} E \to \dotsb.
\]
There is then a canonical map of $M$-modules $\tel_m(E)\to E[m^{-1}]$, but unlike in the case of $1$-topoi, it is not invertible in general. For example, if $T=\Spc$ and $F=\bigcoprod_{n\geq 0}B\Sigma_n$ is the free commutative monoid on a single element $a$, then $F[a^{-1}]$ is the group completion of $F$, but $\tel_a(F)\simeq \Z\times B\Sigma_\infty$ does not admit a monoid structure since its fundamental groups are not abelian.

\begin{prop}\label{prop:group completion}
	Let $T$ be an $\infty$-topos, let $M$ be a commutative monoid in $T$, and let $m\colon *\to M$ be a global section.
	For any $M$-module $E$, the canonical map $\tel_m(E) \to E[m^{-1}]$ is acyclic.
\end{prop}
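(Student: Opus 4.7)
The plan is to reduce to the universal case $E=M$ via the $M$-module tensor product, and then to identify the map in question with the plus construction.

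For the reduction, both $\tel_m(-)$ and $(-)[m^{-1}]$ commute with $M$-linear colimits ($(-)[m^{-1}]$ is a left adjoint and $\tel_m$ is a filtered colimit), and there are natural equivalences $\tel_m(E)\simeq E\otimes_M \tel_m(M)$ and $E[m^{-1}]\simeq E\otimes_M M[m^{-1}]$. For a free $M$-module $E=M\otimes X$ with $X\in T$, the map in question becomes $(\tel_m(M)\to M[m^{-1}])\times X$. Since acyclic morphisms are stable under finite products and colimits by \lemref{lem:modality}, writing an arbitrary $M$-module as a colimit of free modules reduces us to the case $E=M$.

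For the universal case, I would identify $M[m^{-1}]$ with $\tel_m(M)^+$. The key conceptual point is that ``$m$-local $M$-module'' is strictly stronger than ``$m$ acts by an equivalence'': it requires a coherent extension of the $M$-action to an $M[m^{-1}]$-action, as the commutative monoid example after the proposition illustrates (where the telescope is $\Z\times B\Sigma_\infty$ but the localization is the plus construction thereof). In particular, $M[m^{-1}]$ is a commutative monoid, hence a loop space, and the canonical map $\tel_m(M)\to M[m^{-1}]$ should factor through the plus construction $\tel_m(M)^+$. Checking that $\tel_m(M)^+$ satisfies the universal property of $M[m^{-1}]$ then yields $M[m^{-1}]\simeq \tel_m(M)^+$, so acyclicity of $\tel_m(M)\to M[m^{-1}]$ is built into the definition of $+$.

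The principal obstacle will be rigorously showing $\tel_m(M)^+\simeq M[m^{-1}]$, since $\tel_m(M)$ itself is not a commutative monoid. The likely route is to use that the plus construction is a product-preserving reflection onto ``modal'' objects (those unchanged by $+$): every $m$-local commutative monoid is modal, being a loop space in the underlying topos. Hence any map $\tel_m(M)\to N$ to such an $N$ factors uniquely through $\tel_m(M)^+$, yielding a comparison map $\tel_m(M)^+\to M[m^{-1}]$; one then verifies this is an equivalence by checking that $\tel_m(M)^+$ inherits the structure of an $m$-local commutative $M$-algebra. This second half is the delicate part, and probably requires exploiting the product-preservation of $+$ together with the fact that $M$-module structure in the $\infty$-topos $\Mod_M(T)$ can be detected through product diagrams.
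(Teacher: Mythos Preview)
Your reduction to $E=M$ is correct and matches the paper's first step. The substantive gap is in the second half: you propose to show directly that $\tel_m(M)^+$ is an $m$-local $M$-module, but the tools you invoke are not enough to do this. Product-preservation of the plus construction does give $\tel_m(M)^+$ an $M$-module structure (since $M^+\simeq M$), but it does not force the self-map $m\cdot$ to become an equivalence. Indeed, whether $m$ acts invertibly on the telescope is governed by Robalo's criterion (Proposition~\ref{prop:robalo}): it holds precisely when some cyclic permutation of $m^{\otimes n}$ is nullhomotopic in the telescope. For the free commutative monoid $F=\coprod_n B\Sigma_n$ this fails before applying $+$, and the reason it succeeds after $+$ is the specific group-theoretic fact that the $5$-cycle lies in the perfect subgroup $A_5\subset\Sigma_5$ and hence dies under hypoabelianization. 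Your outline never invokes any such input, so the ``delicate part'' is not merely delicate---it is where all the content lives, and product-preservation alone cannot supply it. A related confusion: you speak of $\tel_m(M)^+$ inheriting a \emph{commutative $M$-algebra} structure, but $\tel_m(M)$ carries no commutative monoid structure to inherit (this is exactly the paper's $\Z\times B\Sigma_\infty$ example), so product-preservation gives nothing here.

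The paper's route is to make two further reductions you omit. First, since the classifying $\infty$-topos for a pointed commutative monoid is a presheaf topos, one may assume $T=\Spc$. Second, using the free commutative monoid $F$ on one generator $a$ and the map $F\to M$ sending $a\mapsto m$, one has $\tel_m(M)\simeq M\otimes_F\tel_a(F)$ and $M[m^{-1}]\simeq M\otimes_F F^\gp$; since $M\otimes_F(-)$ preserves acyclic maps, one reduces to $(F,a)$, where $\pi_0(F)[a^{-1}]$ is a group and hence $F[a^{-1}]=F^\gp$. The statement then becomes exactly the McDuff--Segal group completion theorem, which the paper proves (following Nikolaus) by comparing the square with vertices $\tel_m(M)^+$, $M^\gp$, $\tel_m(M^+)^+$, $(M^+)^\gp$ and applying Proposition~\ref{prop:robalo} to the hypoabelianized monoid $M^+$---this is where the $5$-cycle fact enters.
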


\begin{proof}
	Since acyclic maps are closed under colimits and finite products, it suffices to prove the result for $E=M$.
	The classifying $\infty$-topos for pointed commutative monoids is a presheaf $\infty$-topos (namely, presheaves on the pushout of $(\Spc_*^\fin)^\op\leftarrow (\Spc^\fin)^\op \to (\CMon(\Spc)^\fin)^\op$ in the $\infty$-category of $\infty$-categories with finite limits), so we are immediately reduced to the case $T=\Spc$. Let $F$ be the free commutative monoid on an element $a$. The element $m$ induces a morphism of commutative monoids $F\to M$ sending $a$ to $m$, and we have $\tel_m(M)=M\otimes_F \tel_a(F)$ and $M[m^{-1}]\simeq M\otimes_F F^\gp$, where $\otimes_F$ is the tensor product of spaces with $F$-action. Since acyclic maps are closed under colimits and finite products, $M\otimes_F(-)$ preserves acyclic maps. We can therefore replace $(M,m)$ by $(F,a)$, and in particular we can assume that $\pi_0(M)[m^{-1}]$ is a group. In this case we must show that the canonical map $\tel_m(M)^+ \to M^\gp$ is an equivalence, which is the classical McDuff–Segal group completion theorem \cite{mcduff-segal}. We recall a proof due to Nikolaus \cite{Nikolaus}.
	Note that the plus construction preserves finite products and hence commutative monoids.
	 Consider the commutative square
\[
\begin{tikzcd}
	\tel_m(M)^+ \ar{r}\ar{d} & M^\gp \ar{d} \\
	\tel_m(M^+)^+ \ar{r} & (M^+)^\gp\rlap.
\end{tikzcd}
\]
The left vertical is an equivalence since the plus construction is a left localization of $\Spc$. The lower horizontal map is an equivalence by Proposition~\ref{prop:robalo}, since the cyclic permutation of order $5$ becomes trivial in the hypoabelianization of $\Sigma_5$. The top horizontal map is a stable equivalence by the localization theory of $\Einfty$-ring spectra. Hence the right vertical map is a stable equivalence. Since $\Einfty$-groups are simple, the right vertical map is in fact an equivalence, so the top horizontal map is an equivalence as well.
\end{proof}

\begin{rem}
	In the proof of Proposition~\ref{prop:group completion}, the reduction to the McDuff–Segal theorem only uses that $M$ is an $\sE_2$-monoid, and the given proof of that theorem only uses that $M$ is $\sE_3$. In fact, the McDuff–Segal theorem holds more generally for homotopy commutative $\sE_1$-monoids \cite{ORW}, so Proposition~\ref{prop:group completion} holds for $M$ an $\sE_2$-monoid.
\end{rem}

If $S$ is an arbitrary set of global sections of $M$, we can define more generally the telescope functor $\tel_S\colon \Mod_M(T)\to \Mod_M(T)$ by
\[
\tel_S(E) = \colim_{\substack{F\subset S\\F\text{ finite}}} \tel_{\prod_{m\in F}m}(E)
\]
(see \cite[\sectsign 6.1]{hoyois-sixops} for a precise construction). Proposition~\ref{prop:group completion} immediately implies that the canonical map $\tel_S(E)\to E[S^{-1}]$ is acyclic. In particular:

\begin{cor}\label{cor:group completion}
	Let $T$ be an $\infty$-topos, $M$ a commutative monoid in $T$, and $S$ a set of global sections of $M$ such that $\pi_0(M)[S^{-1}]$ is a group. Then there is an equivalence $\tel_S(M)^+\simeq M^\gp$.
\end{cor}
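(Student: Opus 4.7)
My approach is to factor the desired equivalence through the localization $M[S^{-1}]$. By the extension of Proposition~\ref{prop:group completion} to an arbitrary set $S$ of sections, noted in the paragraph preceding the corollary, the canonical map $\tel_S(M)\to M[S^{-1}]$ is acyclic. It therefore suffices to establish (a) that $M[S^{-1}]$ is plus-closed and (b) that $M[S^{-1}]\simeq M^\gp$: indeed, applying $(-)^+$ to the acyclic map gives $\tel_S(M)^+\simeq M[S^{-1}]^+$, which together with (a) and (b) yields the desired equivalence $\tel_S(M)^+\simeq M^\gp$. Here I use that acyclic maps are inverted by the plus construction, a formal consequence of the fact that $(-)^+$ is the reflective localization onto the plus-closed objects for the modality of Lemma~\ref{lem:modality}.

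For (a): the hypothesis that $\pi_0(M)[S^{-1}]$ is a group makes $M[S^{-1}]$ a grouplike commutative monoid in $T$, hence an $\sE_\infty$-group, hence a loop space, as recalled in Section~\ref{sec:plus}. Loop spaces are plus-closed, so $M[S^{-1}]\simeq M[S^{-1}]^+$.

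For (b): any map $M\to G$ with $G$ a grouplike commutative monoid in $T$ automatically inverts every global section of $M$, in particular the elements of $S$, and therefore factors uniquely through $M\to M[S^{-1}]$; conversely, since $M[S^{-1}]$ is itself grouplike, the map $M\to M[S^{-1}]$ factors uniquely through $M\to M^\gp$. By uniqueness, these mutually inverse factorizations give the equivalence $M[S^{-1}]\simeq M^\gp$. The substantive content of the corollary has already been carried out in Proposition~\ref{prop:group completion}; the rest is formal, with the only mild subtlety being the bookkeeping that identifies $\tel_S(M)^+$ with $M[S^{-1}]$ via acyclicity plus plus-closedness of the target.
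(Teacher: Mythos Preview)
Your proof is correct and is exactly the argument the paper has in mind: the paper states the corollary with the words ``In particular:'' immediately after observing that $\tel_S(E)\to E[S^{-1}]$ is acyclic, leaving implicit precisely the two formal steps you spell out (that $M[S^{-1}]$ is grouplike, hence plus-closed and equivalent to $M^\gp$). The only point you might make more explicit is why $\pi_0(M[S^{-1}])\simeq \pi_0(M)[S^{-1}]$, but this follows since acyclic maps induce isomorphisms on $\pi_0$ and $\pi_0$ commutes with the filtered colimit defining the telescope.
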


In what follows we will use the plus construction in $\Spc$ and in the $\infty$-topos of Nisnevich sheaves on $\Sm_k$.

\section{Proofs of the main results}

Recall that $\FSYN_\infty=\colim_{d\to\infty}\FSYN_d$.
In the notation of \secref{sec:plus}, there is a canonical map
\[
\Z\times\FSYN_\infty\to \tel_{1}(\FSYN),
\]
which is an equivalence on connected schemes (in particular, it is a Zariski-local equivalence).

\begin{prop}\label{prop:main}
	Let $k$ be a field. Then $L_\mot\FSYN_{\infty}\in \H(k)$ is connected in the Nisnevich topology and $(\Lhtp\FSYN_\infty)(k)$ is connected.
\end{prop}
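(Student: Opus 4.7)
The plan is to reduce both assertions to a single pointwise claim: for every field $F$ (over $k$) and every $Z \in \FSYN_d(F)$, the class $[Z]$ is trivial in $\pi_0(\Lhtp \FSYN_\infty)(F)$. Applied to $F = k$ this gives the second statement; applied to all residue fields of points of $\Sm_k$, which compute the Nisnevich stalks of $\pi_0(L_\mot \FSYN_\infty)$, it gives the Nisnevich connectedness.

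\emph{Step 1 (reduction to the étale case via the moving lemma).} Choose a closed embedding $Z \hookrightarrow \A^n_F$ for $n$ large enough that the étale locus $U = \Hilb_d^\et(\A^n)$ is dense in the smooth quasi-projective $F$-scheme $X = \Hilb_d^\lci(\A^n)$ (which holds once $n$ exceeds the embedding dimension of every point of $X$). This exhibits $Z$ as an $F$-point $x\colon \Spec F \to X$. Proposition~\ref{prop:moving} yields framed correspondences $\alpha, \beta \in \Corr^\fr_F(\Spec F, U)$ with finite étale supports $Y_\alpha, Y_\beta$ together with an $\A^1$-homotopy $x + \alpha \sim \beta$ in $\Corr^\fr_F(\A^1, X)$. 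Pulling back the universal finite syntomic family on $X$ along the maps $Y_\alpha, Y_\beta \to U \subset X$ produces finite étale $F$-schemes $W_\alpha, W_\beta$ (étale, since the pullback of an étale morphism along an étale base change remains étale), and passing the $\A^1$-homotopy through gives a finite syntomic family over $\A^1_F$ with fibers $Z \sqcup W_\alpha$ and $W_\beta$. Therefore $[Z] + [W_\alpha] = [W_\beta]$ in $\pi_0(\Lhtp \FSYN_\infty)(F)$.

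\emph{Step 2 (trivializing the étale case).} It then suffices to show that $[W] = 0$ for every finite étale $F$-scheme $W$, for then $[W_\alpha] = [W_\beta] = 0$ and the above equation forces $[Z] = 0$. Decomposing $W$ into connected components, assume $W = \Spec L$ with $L = F[t]/p(t)$ for $p$ a monic separable irreducible polynomial of degree $n$. The family
\[
\Spec F[s,t]/\bigl((1-s)\,p(t) + s\,t^n\bigr)
\]
is monic of degree $n$ in $t$, hence finite syntomic of degree $n$ over $\Spec F[s] = \A^1$, and interpolates between $\Spec L$ at $s=0$ and the fat point $\Spec F[t]/t^n$ at $s=1$. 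A further interpolation between $t^n$ and $\prod_{i=1}^n (t - c_i)$ for distinct $c_i \in F$ shows that $[\Spec F[t]/t^n]$ equals the class of $n$ copies of $\Spec F$, which is the basepoint of $\FSYN_\infty$. This handles the case $|F| \geq n$.

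The main obstacle is the remaining case of small finite $F$ with $|F| < n$, where one cannot realize $n$ distinct $F$-rational points inside $\A^1$. I would resolve this by stabilizing and moving into higher-dimensional affine space: choose $m$ and $N$ with $|F|^m \geq n + N$ so that both $\Spec L \sqcup N \cdot \Spec F$ and $(n+N) \cdot \Spec F$ define $F$-points of $\Hilb_{n+N}^\lci(\A^m)$, and construct an $\A^1$-family connecting them by a Bertini-type pencil argument analogous to Lemma~\ref{lem:bertini}, choosing an $F$-rational line in the linear system defining these subschemes whose generic member remains finite of degree $n+N$ (automatically syntomic for $m \geq 2$). This closes Step 2 and completes the proof.
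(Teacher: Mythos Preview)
Your approach is the paper's: reduce Nisnevich connectedness to fields via Morel's lemma, apply the moving lemma (Proposition~\ref{prop:moving}) with $X=\Hilb_d^\lci(\A^m)$ and $U$ the \'etale locus to produce a cobordism $Z\sqcup W_\alpha\sim W_\beta$ with $W_\alpha,W_\beta$ finite \'etale, and then show that any finite \'etale $F$-scheme is cobordant (after stabilization) to its degree. For this last step the paper simply invokes \cite[Proposition~B.1.4]{EHKSY1}, which shows that $Z(f)\subset\A^1$ for any monic $f$ of degree $d$ is cobordant to $d$ points, uniformly over any base.

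Your Step~2 reproves the easy half of this via $(1-s)p(t)+st^n$, but then needs $|F|\geq n$ to split $t^n$ into $n$ distinct $F$-rational linear factors. The proposed workaround for small finite fields is not a proof: there is no linear structure on $\Hilb_{n+N}^\lci(\A^m)$ for $m\geq 2$, so there is no ``linear system'' in which to choose a pencil through two prescribed points, and ``automatically syntomic for $m\geq 2$'' is false (e.g.\ $\Spec F[x,y]/(x^2,xy,y^2)\subset\A^2$ is finite of degree $3$ but not lci, so a family passing through it would leave the lci Hilbert scheme). The repair is much simpler than what you attempt: interpolate $t^n$ to $t^{n-1}(t-1)$ via $t^n-st^{n-1}$, which is monic in $t$ for all $s$ and gives a cobordism $\Spec F[t]/t^n\sim \Spec F[t]/t^{n-1}\sqcup\Spec F$ in $\FSYN_n$. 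Since disjoint union with $\Spec F$ is exactly the transition map defining $\FSYN_\infty$, this yields $[\Spec F[t]/t^n]=[\Spec F[t]/t^{n-1}]$ in $\pi_0(\Lhtp\FSYN_\infty)(F)$, and induction on $n$ finishes Step~2 over any field.
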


\begin{proof}
	For the first statement, by \cite[Lemma 6.1.3]{morel-connectivity}, it suffices to show that the stalks of $L_\mot\FSYN_\infty$ at separable finitely generated field extensions of $k$ are connected. Since there is a surjection
	\[
	\pi_0(\Lhtp\FSYN_\infty)(k) \to \pi_0(L_\mot\FSYN_\infty)(k)
	\]
	by \cite[\sectsign2, Corollary 3.22]{MV}, we are reduced to proving the second statement. We will show more precisely that for any $T\in\FSYN_d(k)$ there exists $n\geq 0$ and a cobordism $T+n\sim d+n$ in $\FSYN_{d+n}$. 
	Choosing an embedding of $T$ in $\A^m$ defines a $k$-point of the smooth $k$-scheme $\Hilb_d^\lci(\A^m)$. 
	We now apply Proposition~\ref{prop:moving} with $X=\Hilb_d^\lci(\A^m)$ and $U$ the open subscheme classifying finite étale subschemes of $\A^m$ of degree $d$. We obtain in particular a finite syntomic correspondence $\A^1\leftarrow H\to \Hilb_d^\lci(\A^m)$ such that the fibers $H_0$ and $H_1$ are étale over $k$, $H_0=\{x\}\sqcup H_0'$, and $H_0'$ and $H_1$ map to $U$. The map $H\to \Hilb_d^\lci(\A^m)$ classifies a finite syntomic morphism $W\to H$, and the composite $W\to H\to \A^1_k$ is a cobordism between $W_0\simeq T\sqcup W_0'$ and $W_1$ such that $W_0'$ and $W_1$ are finite étale over $k$. Using \cite[Proposition B.1.4]{EHKSY1}, both $W_0'$ and $W_1$ are further cobordant to their degree, which concludes the proof.
\end{proof}

\begin{cor}\label{cor:main}
	Let $k$ be a field. Then the canonical maps
	\begin{gather*}
	L_\mot(\Z\times\FSYN_\infty)^+ \to L_\mot(\FSYN^\gp)\\
	\Z\times (\Lhtp\FSYN_\infty)(k)^+ \to (\Lhtp\FSYN)(k)^\gp
	\end{gather*}
	are equivalences.
\end{cor}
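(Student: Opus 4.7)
The plan is to apply Corollary~\ref{cor:group completion} in two different $\infty$-topoi, namely $\H(k)$ for the first equivalence and $\Spc$ for the second, in each case with the commutative monoid given by (the image of) $\FSYN$ and with the distinguished global section $1 \in \FSYN(k)$ classifying $\Spec k$ with its canonical finite syntomic structure.

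First, I would verify that $\pi_0(M)[1^{-1}]$ is a group in each case. Since $\pi_0$ preserves filtered colimits and $L_\mot$ is a left adjoint, we have
\[
\pi_0(L_\mot\FSYN)[1^{-1}] \simeq \pi_0(\tel_1 L_\mot\FSYN) \simeq \pi_0(L_\mot\tel_1\FSYN).
\]
The Zariski-local (hence motivic) equivalence $\Z\times\FSYN_\infty\simeq\tel_1(\FSYN)$ recalled in the preamble lets us rewrite this as $\Z\times\pi_0(L_\mot\FSYN_\infty)$, which equals the constant Nisnevich sheaf $\Z$ by the first half of Proposition~\ref{prop:main}. An analogous computation, using that evaluation at $\Spec k$ commutes with colimits, together with the second half of Proposition~\ref{prop:main}, gives $\pi_0((\Lhtp\FSYN)(k))[1^{-1}] \simeq \Z$.

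Second, I would invoke Corollary~\ref{cor:group completion} to obtain equivalences
\[
\tel_1(L_\mot\FSYN)^+\simeq (L_\mot\FSYN)^\gp \quad\text{in } \H(k),
\]
and
\[
\tel_1((\Lhtp\FSYN)(k))^+\simeq (\Lhtp\FSYN)(k)^\gp \quad\text{in }\Spc.
\]

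Third, I would identify both ends with the terms in the statement. On the source side, the Zariski-local equivalence $\Z\times\FSYN_\infty\simeq\tel_1(\FSYN)$ identifies $\tel_1(L_\mot\FSYN)^+$ with $L_\mot(\Z\times\FSYN_\infty)^+$, and analogously in the pointwise case (using that $\Z$ is discrete, so that $(\Z\times X)^+\simeq\Z\times X^+$). On the target side, because $L_\mot$ is left exact it commutes with group completion of $\Einfty$-monoids, yielding $(L_\mot\FSYN)^\gp\simeq L_\mot(\FSYN^\gp)$; the pointwise target $(\Lhtp\FSYN)(k)^\gp$ is already what the statement asserts. The only substantive input is Proposition~\ref{prop:main}; the rest of the argument is a formal consequence of the framework of \secref{sec:plus}, so I do not anticipate any serious obstacle beyond bookkeeping of colimit-commutation identities.
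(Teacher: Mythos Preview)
Your overall strategy is the same as the paper's: apply Corollary~\ref{cor:group completion} using Proposition~\ref{prop:main} to see that inverting $1$ already produces a group. The bookkeeping on the source side and the second (pointwise) equivalence are fine.

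There is, however, a genuine gap in your treatment of the first equivalence. You propose to apply Corollary~\ref{cor:group completion} in the ``$\infty$-topos $\H(k)$'' and then identify $(L_\mot\FSYN)^\gp$ with $L_\mot(\FSYN^\gp)$ by the claim that $L_\mot$ is left exact. Neither step is valid: $\H(k)$ is \emph{not} an $\infty$-topos (so Corollary~\ref{cor:group completion} does not apply there), precisely because $L_\mot$ is \emph{not} left exact. For instance, the squaring map $\A^1\to\A^1$ has two-point fiber over $1$, but becomes an equivalence after $L_\mot$; so $L_\mot$ cannot preserve pullbacks, and there is no formal reason for it to commute with $\Omega B(-)$.

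The paper works instead in the $\infty$-topos of Nisnevich sheaves (where the plus construction in the statement lives anyway) and provides the missing, nontrivial input: $L_\mot(\FSYN^\gp)$ is the group completion of $L_\mot(\FSYN)$ \emph{in Nisnevich sheaves}. This is shown by reducing to a perfect base field and invoking \cite[Theorem~3.4.11]{EHKSY1}, which guarantees that the objectwise group completion of $L_\mot(\FSYN)$, being an $\A^1$-invariant presheaf with framed transfers, is already motivically local. Once this is established, Corollary~\ref{cor:group completion} applies exactly as you intend.
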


\begin{proof}
	We first observe that $L_\mot(\FSYN^\gp)$ is the group completion of $L_\mot(\FSYN)$ in Nisnevich sheaves. To prove this we may replace $k$ by a perfect subfield, and the claim follows from \cite[Theorem 3.4.11]{EHKSY1} since the objectwise group completion of $L_\mot(\FSYN)$ is an $\A^1$-invariant presheaf with framed transfers.
	By Corollary~\ref{cor:group completion}, it remains to show that the monoids $\pi_0^\nis L_\mot(\FSYN_\infty)$ and $\pi_0(\Lhtp\FSYN_\infty)(k)$ are groups, which follows from Proposition~\ref{prop:main}.
\end{proof}

Except for the statement about positive characteristic, which we shall prove in \secref{sec:remove-plus}, Theorem~\ref{thm:intro} follows from Corollary~\ref{cor:main} and \cite[Corollary 3.4.2(i)]{EHKSY3}.

If $S$ is a scheme and $a\in\sO(S)^\times$, we denote by $\langle a\rangle\in \FSYN^\fr(S)$ the finite syntomic $S$-scheme $S$ framed by the image of $a$ under the canonical map $\sO(S)^\times \to\Omega K(S)$. We write $n_\epsilon$ for the alternating sum
\[
\langle 1\rangle + \langle -1\rangle + \langle 1\rangle +\dotsb
\]
with $n$ terms, and we write $h$ for $2_\epsilon=\langle 1\rangle+\langle -1\rangle$. Recall that we have $\FSYN_\infty^\fr=\colim_{d\to\infty}\FSYN_{2d}^\fr$ and $\FSYN_\infty^\ornt=\colim_{d\to\infty}\FSYN_{2d}^\ornt$, where the transition maps are given by adding $h$. In the notation of \secref{sec:plus}, we have equivalences
\[
\Z\times\FSYN_\infty^\fr\simeq \tel_h(\FSYN^\fr)
\quad\text{and}\quad 
\Z\times\FSYN_\infty^\ornt\simeq \tel_h(\FSYN^\ornt)
\]
on connected schemes.

We recall a basic construction from \cite[\sectsign B.1.1]{EHKSY1}. Let $S$ be a scheme and $f(x)\in\sO(S)[x]$ a polynomial with invertible leading coefficient. Then the zero locus $Z(f)\subset \A^1_S$ is a finite syntomic $S$-scheme. Moreover, $f$ generates the conormal sheaf $\sN_{Z(f)/\A^1_S}=(f)/(f^2)$, hence defines a framing of $Z(f)$ over $S$. We denote this framed finite syntomic $S$-scheme by $\phi(f)$. By \cite[Proposition B.1.4]{EHKSY1}, $\phi(f)$ is framed cobordant to $\langle a\rangle d_\epsilon$, where $a$ is the leading coefficient of $f(x)$ and $d$ is its degree.

\begin{lem}\label{lem:GW-relation}
	Let $S$ be a scheme and $a\in\sO(S)^\times$. Then there exists an $\A^1$-homotopy in $\FSYN_S^\fr$ between $\langle a\rangle+\langle -a\rangle$ and $h=\langle 1\rangle+\langle -1\rangle$.
\end{lem}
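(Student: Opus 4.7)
The plan is to realize both sides of the desired homotopy as $\phi(f)$ for a single well-chosen polynomial, so that the framed cobordism $\phi(f)\sim\langle u\rangle\cdot d_\epsilon$ recalled from \cite[Proposition B.1.4]{EHKSY1} (with $u$ the leading coefficient and $d$ the degree of $f$) furnishes the homotopy for free. The right choice is $f(x)=x(x-a)\in\sO(S)[x]$, of degree $2$ and leading coefficient $1$; since $a\in\sO(S)^\times$, its zero locus $\{0\}\sqcup\{a\}\subset\A^1_S$ is finite étale over $S$.

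The first step is to identify $\phi(f)\in\FSYN^\fr(S)$ explicitly. At a simple root $\alpha$ of any polynomial $g\in\sO(S)[x]$, the comparison map $(g)/(g^2)|_\alpha\to\sL_{\A^1_S/S}|_\alpha=\sO_\alpha\cdot dx$ sends $g\mapsto g'(\alpha)\,dx$, so the framing produced by $\phi$ at $\alpha$ is the unit $\langle g'(\alpha)\rangle$. For $f(x)=x(x-a)$ we have $f'(0)=-a$ and $f'(a)=a$, and therefore
\[
\phi(f)=\langle -a\rangle+\langle a\rangle=\langle a\rangle+\langle -a\rangle
\]
in $\FSYN^\fr(S)$ (addition being disjoint union of framed schemes, hence commutative).

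The second step is to apply \cite[Proposition B.1.4]{EHKSY1} to $f$ directly: since its leading coefficient is $1$ and its degree is $2$, the proposition yields an $\A^1$-homotopy in $\FSYN^\fr_S$ between $\phi(f)$ and $\langle 1\rangle\cdot 2_\epsilon=\langle 1\rangle+\langle -1\rangle=h$. Combined with the identification of the first step, this is the $\A^1$-homotopy $\langle a\rangle+\langle -a\rangle\sim h$ asserted by the lemma.

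The proof has essentially no obstacle beyond identifying the right polynomial; the only point requiring care is pinning down the sign convention in the identification $\phi(g)|_\alpha=\langle g'(\alpha)\rangle$ at a simple root. The argument is uniform in the characteristic of $S$: in characteristic $2$ both sides collapse to $\langle a\rangle+\langle a\rangle$ and $\langle 1\rangle+\langle 1\rangle$ respectively, and $f(x)=x(x-a)$ still has an étale zero locus over $S$.
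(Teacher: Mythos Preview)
Your proof is correct and shares its starting point with the paper's: both identify $\phi(x(x-a))$ with $\langle a\rangle+\langle -a\rangle$ (the paper does this via the congruences $x(x-a)\equiv -ax\pmod{x^2}$ and $x(x-a)\equiv a(x-a)\pmod{(x-a)^2}$, which is exactly your derivative computation). The only difference is in how the homotopy to $h$ is produced. You invoke \cite[Proposition~B.1.4]{EHKSY1} as a black box to pass from $\phi(x(x-a))$ to $\langle 1\rangle\cdot 2_\epsilon$. The paper instead writes down the explicit one-parameter family $\phi(x(x-h(t)))$ with $h(t)=1+t(a-1)$, which at $t=1$ gives $\phi(x(x-a))$ and at $t=0$ gives $\phi(x(x-1))=\langle 1\rangle+\langle -1\rangle$ directly. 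The paper's route is entirely self-contained and furnishes a single $\A^1$-homotopy on the nose; yours is marginally quicker but depends on the cited proposition (and, strictly read, may only yield a chain of $\A^1$-homotopies rather than one, which is harmless for every application in the paper). Either way the argument is the same idea.
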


\begin{proof}
	We have $\langle a\rangle+\langle -a\rangle\simeq \phi(x(x-a))$, because $x(x-a)$ is congruent to $a(x-a)$ modulo ${(x-a)^2}$ and to $-ax$ modulo ${x^2}$. Thus, if $h(t)=1+t(a-1)$, then $\phi(x(x-h(t)))$ is the desired $\A^1$-homotopy.
\end{proof}

\begin{rem}
	The main point of Lemma~\ref{lem:GW-relation} is that the relation $\langle a\rangle+\langle -a\rangle=\langle 1\rangle+\langle -1\rangle$ in the Grothendieck–Witt group holds in $\Lhtp\FSYN_S^\fr$ prior to group completion. This is not the case of some other relations, such as $\langle ab^2\rangle=\langle a\rangle$ (indeed, $\FSYN^\fr_1\simeq \Omega K$ is already $\A^1$-invariant on regular schemes).
\end{rem}

We denote by $\GW$ the presheaf of Grothendieck--Witt rings \cite{knebusch-bilinear}, and by $\underline{\GW} = \underline{K}^{MW}_0$ the presheaf of unramified Grothendieck--Witt rings \cite[\sectsign 3.2]{Morel}.
Note that if $K$ is a field then $\underline{GW}(K) = \GW(K)$ \cite[Lemma 3.10]{Morel}, whereas in general the relationship between $\underline{GW}(R)$ and $\GW(R)$ is more subtle.

\begin{lem}\label{lem:GW-generators}
	Let $R$ be a regular local ring over a field. Then the group $\underline{\GW}(R)$ is generated by $\langle a\rangle$ for $a\in R^\times$.
\end{lem}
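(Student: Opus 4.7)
The plan is to use Morel's unramifiedness of $\underline{K}^{MW}_0$ as a Nisnevich sheaf on smooth $k$-schemes: the restriction map $\underline{\GW}(R)\hookrightarrow \underline{\GW}(K)=\GW(K)$ is injective, where $K=\operatorname{Frac}(R)$. Since every symmetric bilinear form over a field diagonalizes, $\GW(K)$ is generated as an abelian group by symbols $\langle c\rangle$ with $c\in K^\times$, so any $\xi\in \underline{\GW}(R)$ can be written inside $\GW(K)$ as $\xi=\sum_i\epsilon_i\langle c_i\rangle$ with $c_i\in K^\times$ and $\epsilon_i=\pm 1$. Using the identity $\langle cd^2\rangle=\langle c\rangle$ in $\GW(K)$ to clear denominators, I may further assume each $c_i\in R\cap K^\times$.

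The main task is then to rewrite $\xi$ using only symbols $\langle a\rangle$ with $a\in R^\times$. I would induct on the total valuation $\sum_{i,\mathfrak{p}}v_{\mathfrak{p}}(c_i)$, the sum being taken over height-one primes $\mathfrak{p}\subset R$, exploiting the unramifiedness of $\xi$: for each such $\mathfrak{p}$ the second-residue $\partial_{\mathfrak{p}}\xi=0$, which forces cancellations among the $c_i$ of positive $\mathfrak{p}$-valuation. These cancellations can be made explicit via the Witt chain relation $\langle a\rangle+\langle b\rangle=\langle a+b\rangle+\langle ab(a+b)\rangle$ (valid when $a+b\in K^\times$), together with the hyperbolic relation $\langle a\rangle+\langle -a\rangle=h$ (the $\GW(K)$ shadow of Lemma~\ref{lem:GW-relation}). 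Each application should strictly decrease the total valuation while introducing only symbols of elements that are units at the prime in question.

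The main obstacle is that manipulations designed to kill valuations at one height-one prime may spoil the situation at another, so the induction must be organized carefully, for instance prime by prime with a suitable lexicographic control on the multiset of valuations. A cleaner alternative I would pursue first is to appeal to Morel's explicit symbolic presentation of the stalk $\underline{K}^{MW}_0(R)$ for $R$ a local ring of a smooth $k$-scheme in \cite{Morel}, which presents the group directly by generators $\langle a\rangle$ with $a\in R^\times$ modulo the usual Milnor--Witt relations; with that presentation in hand, the lemma is immediate. In characteristic different from $2$, an even shorter route is available: every non-degenerate symmetric bilinear form over a local ring with $1/2$ diagonalizes, so $\GW(R)$ itself is generated by $\langle a\rangle$, $a\in R^\times$, and the natural map $\GW(R)\to\underline{\GW}(R)$ is then surjective by construction.
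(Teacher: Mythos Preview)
Your proposal has the right shape in residue characteristic $\neq 2$ but a genuine gap in characteristic $2$, and one step that is not ``by construction'' even away from $2$.

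First, the claim that $\GW(R)\to\underline{\GW}(R)$ is surjective \emph{by construction} is not correct: $\underline{\GW}(R)$ is defined as the subgroup of $\GW(K)$ of classes unramified at every height-one prime, and the assertion that everything unramified extends over $R$ is precisely the Gersten/purity theorem for $\underline{K}^{MW}_0$. The paper invokes exactly this as an input in characteristic $\neq 2$, citing \cite[Theorem~10.12]{norms} for $\underline{\GW}(R)=\GW(R)$ and then \cite[Corollary~I.3.4]{HM} for diagonalization. So your characteristic $\neq 2$ argument is morally the paper's, but you must cite purity rather than declare it tautological.

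Second, in characteristic $2$ neither of your two routes closes. The appeal to a ``symbolic presentation of the stalk $\underline{K}^{MW}_0(R)$'' in \cite{Morel} does not exist in the form you need: Morel presents $K^{MW}_*$ only over fields and defines the unramified sheaf via the Gersten complex, so invoking a presentation with generators $\langle a\rangle$, $a\in R^\times$, is circular. Your induction on $\sum_{i,\mathfrak p} v_{\mathfrak p}(c_i)$ is a plausible strategy but, as you yourself note, rearranging symbols at one prime disturbs others, and you give no mechanism to terminate the process; moreover the residue calculus for symmetric bilinear forms in characteristic $2$ is delicate and you have not said which residue you use or why its vanishing forces the specific cancellations you need. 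The paper's actual proof in characteristic $2$ is quite different and uses substantial external input: after reducing via Popescu to $R$ essentially smooth over a perfect field, it filters $\underline{\GW}$ by powers $\underline I^{\,n}$ of the fundamental ideal, identifies the graded pieces with the log-differential sheaves $\nu^n$ via Kato's theorem \cite{kato1982symmetric}, uses \cite{geisser-levine} for strict $\A^1$-invariance and \cite{morrow2015k} for Zariski-local generation of $\nu^n$ by symbols, and finally uses $\underline I^{\,m}(R)=0$ for $m>\dim R$ from \cite{HM} to terminate a finite descent. That nilpotence of $I$ is precisely the missing termination device your inductive sketch lacks.
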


\begin{proof}
	If $R$ has characteristic $\neq 2$, we know that $\underline{\GW}(R)=\GW(R)$ \cite[Theorem 10.12]{norms}, and $\GW(R)$ has the claimed property by \cite[Corollary I.3.4]{HM}.
	Suppose therefore that $R$ has characteristic $2$. By Popescu's theorem \cite[Tag 07GC]{stacks}, we can assume $R$ essentially smooth over a perfect field. The result now follows from Lemma \ref{lem:I-generators}(iii) below.
\end{proof}

\begin{lem}\label{lem:I-generators}
	Let $k$ be a perfect field of characteristic $2$. Let $I \subset \underline{\GW}$ denote the kernel of the rank map $\underline{\GW} \to \Z$, and write $I^n \subset \underline{\GW}$ for the $n$th power Zariski subsheaf of ideals.
	\begin{itemize}
	\item[\rm (i)]
	Each sheaf $I^n$ is strictly $\A^1$-invariant $($in particular a Nisnevich sheaf\,$)$.
	\item[\rm (ii)]
	We have $L_\zar(I^n/I^{n+1})\simeq \nu^n$, where $\nu^n$ is the sheaf of logarithmic differential $n$-forms \cite[\sectsign 2]{geisser-levine}.
	\item[\rm (iii)]
	For $n \geq 1$, the sheaf $I^n$ is Zariski locally generated by $n$-fold Pfister forms $\lAngle a_1, \dots, a_n \rAngle$ with $a_i \in \sO^\times$. The sheaf $\underline{\GW}$ is Zariski locally generated by $\langle a \rangle$ for $a \in \sO^\times$.
	\end{itemize}
\end{lem}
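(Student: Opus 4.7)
The plan is to prove the three parts together, roughly in the order (ii), (i), (iii), using Kato's theorem from the Milnor conjecture in characteristic $2$, Morel's abelianness of strictly $\A^1$-invariant Nisnevich sheaves, and Geisser--Levine's strict $\A^1$-invariance of $\nu^n$.

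For (ii), I would invoke Kato's proof of the Milnor conjecture for quadratic forms in characteristic $2$: for any field $F$ of characteristic $2$, Kato constructed a natural isomorphism $I^n(F)/I^{n+1}(F)\xrightarrow{\sim}\nu^n(F)$ sending an $n$-fold Pfister form $\lAngle a_1,\dots,a_n\rAngle$ to the logarithmic symbol $\tfrac{da_1}{a_1}\wedge\dots\wedge\tfrac{da_n}{a_n}$. Passing to the Zariski sheafification on essentially smooth $k$-schemes gives the claim, the unramified-ness of both sides making the sheafification well-behaved.

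For (i), I would argue by induction on $n$. The base $n=0$ asserts that $\underline{\GW}=\underline{K}_0^{\mathrm{MW}}$ is strictly $\A^1$-invariant, which is a theorem of Morel (valid in any characteristic over a perfect base field). For the inductive step, the category of strictly $\A^1$-invariant Nisnevich sheaves on $\Sm_k$ is an abelian subcategory of all Nisnevich sheaves of abelian groups (Morel, using perfectness of $k$), and $\nu^n$ is strictly $\A^1$-invariant by Geisser--Levine. Hence the short exact sequence
\[
0\to I^{n+1}\to I^n\to \nu^n\to 0,
\]
which is exact as a sequence of Nisnevich sheaves by (ii) (using that over a Henselian local ring every symbol in $\nu^n$ lifts to an $n$-fold Pfister form), exhibits $I^{n+1}$ as the kernel of a map between strictly $\A^1$-invariant sheaves; in particular each $I^n$ is a Nisnevich sheaf and strictly $\A^1$-invariant.

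For (iii), let $P^n\subset I^n$ denote the abelian Zariski subsheaf generated by $n$-fold Pfister forms $\lAngle a_1,\dots,a_n\rAngle$. By (ii) together with the fact that $\nu^n$ is Zariski locally generated by $d\log$-symbols (Geisser--Levine), the composite $P^n\to I^n\to\nu^n$ is a Zariski-local surjection, so $I^n = P^n + I^{n+1}$. Iterating yields $I^n = P^n + I^{n+m}$ for every $m\ge 0$, and one concludes $I^n=P^n$ using an Arason--Pfister-type vanishing: each section of $I^n$ on a local stalk has a fixed rank and hence can lie in only finitely many $I^{n+m}$. The statement for $\underline{\GW}$ follows from the additive decomposition $\underline{\GW}=\Z\cdot\langle 1\rangle + I$ and the case $n=1$, since $\lAngle a\rAngle$ is an integer combination of the symbols $\langle a\rangle$ and $\langle 1\rangle$.

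I expect the main technical obstacle to be step (iii): passing from $I^n=P^n+I^{n+m}$ for every $m$ to $I^n=P^n$. This requires an Arason--Pfister-type vanishing on regular local rings rather than just fields, which I would deduce by unramified-ness of $I^n$ (from (i)), reducing via injection into the generic point to the classical Arason--Pfister Hauptsatz over fields.
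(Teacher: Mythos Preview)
Your proposal assembles the right ingredients but is circular: your argument for (ii) appeals to ``unramified-ness of both sides'' to pass from Kato's field-level isomorphism to one of Zariski sheaves, yet unramifiedness of $I^n$ is part of (i), which you then deduce from the short exact sequence provided by (ii). Concretely, to even construct a map $I^n\to\nu^n$ on a local ring $R$ (or the reverse map on dlog symbols), one must know either that $I^n(R)$ is generated by Pfister forms with unit entries---this is (iii)---or that $I^n/I^{n+1}$ injects into its generic stalk---this needs (i); neither is available at that stage. The paper breaks the circle by introducing Morel's sheaves $\underline{I}^n\subset\underline{\GW}$, which are strictly $\A^1$-invariant \emph{by construction} and agree with the ideal powers on fields. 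With unramifiedness of $\underline{I}^n/\underline{I}^{n+1}$ available a priori, one constructs the isomorphism $\nu^n\simeq\underline{I}^n/\underline{I}^{n+1}$ using Kato on the generic point together with Morrow's theorem that $\nu^n$ is Zariski-locally generated by dlog symbols; then (iii) is proved for $\underline{I}^n$, and only at the end does one conclude $I^n=\underline{I}^n$, whence (i).

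A smaller issue in (iii): the sentence ``each section of $I^n$ on a local stalk has a fixed rank and hence can lie in only finitely many $I^{n+m}$'' is not a valid argument, since every element of $I$ already has rank zero. The paper uses instead that $\underline{I}^m(R)=0$ once $m>\dim R$: by unramifiedness (available for $\underline{I}^m$) this reduces to the fraction field $K$, and since $k$ is perfect one has $[K:K^2]=2^{\dim R}$, forcing $I^m(K)=0$ for $m>\dim R$ by the cited result in Milnor--Husemoller. Your proposed reduction to fields is in the right direction, but the Arason--Pfister Hauptsatz alone does not directly yield the needed eventual vanishing of $I^m(K)$.
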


\begin{proof}
Denote by $\underline{I}^n \subset \underline{\GW}$ the sheaf defined in \cite[Example 3.34]{Morel}. Thus each $\underline{I}^n$ is a strictly $\A^1$-invariant subsheaf of ideals, $\underline{I}^n\underline{I}^m \subset \underline{I}^{n+m}$, and $\underline{I}^n(K) = I^n(K)$ for finitely generated field extensions $K/k$. Moreover $\underline{I}^1$ is the kernel of the rank map, so that $\underline{I}^1 = I$.
For $X\in\Sm_k$, write $N^n(X) \subset \nu^n(X)$ for the subgroup generated by global logarithmic differentials, \textit{i.e.}, expressions of the form $[a_1, \dotsc, a_n] = da_1/a_1 \wedge \dotsb \wedge da_n/a_n$ with $a_i \in \sO(X)^\times$. It is clear that $N^n$ is a subpresheaf of $\nu^n$. Let $X \in \Sm_k$ be connected with generic point $\eta$, and consider the following diagram
\begin{equation*}
\begin{tikzcd}
	N^n(X) \ar[dashed]{r}{\tilde f} \ar[hook]{d} & (\underline{I}^n/\underline{I}^{n+1})(X) \ar[hook]{d} \\
	\nu^n(\eta) \ar{r}{f}[swap]{\simeq} & (\underline{I}^n/\underline{I}^{n+1})(\eta)\rlap,
\end{tikzcd}
\end{equation*}
where $\underline{I}^n/\underline{I}^{n+1}$ is the quotient in the Nisnevich topology.
It follows from \cite[Theorem 8.3]{geisser-levine} that $\nu^n$ is strictly $\A^1$-invariant, and hence the left-hand vertical map is injective. Since strictly $\A^1$-invariant sheaves form an abelian subcategory of Nisnevich sheaves, $\underline{I}^n/\underline{I}^{n+1}$ is strictly $\A^1$-invariant and the right-hand vertical map is an injection.
By \cite{kato1982symmetric}, there is an isomorphism $f$ as displayed, which is determined by $f([a_1, \dotsc, a_n]) = \lAngle a_1, \dotsc, a_n \rAngle$.
This formula implies that $f$ restricts to a morphism $\tilde f$, and that the maps $\tilde f$ (for various $X$) assemble into a morphism of presheaves.
By \cite[Theorem 1.2]{morrow2015k}, $L_\zar N^n = \nu^n$ and hence we have constructed a map $L_\zar \tilde f\colon \nu^n \to \underline{I}^n/\underline{I}^{n+1}$.
Since $L_\zar \tilde f$ induces an isomorphism on fields, it is an isomorphism.

Let $R$ be an essentially smooth local $k$-algebra. For $n\geq 1$, let $J^n \subset I^n(R)$ denote the subgroup generated by $n$-fold Pfister forms.
Consider the exact sequence \[0 \to \underline{I}^{n+1}(R) \to \underline{I}^n(R) \to (\underline{I}^n/\underline{I}^{n+1})(R) \simeq \nu^n(R).\]
The map $J^n \subset \underline{I}^n(R) \to \nu^n(R)$ is surjective, since it sends $\lAngle a_1, \dotsc, a_n \rAngle$ to $[a_1, \dotsc, a_n]$.
It follows that $\nu^n(R) \simeq \underline{I}^n(R)/\underline{I}^{n+1}(R)$.
It also follows that $\underline{I}^n(R) = J^n + \underline{I}^{n+1}(R)$ and hence by induction that $\underline{I}^n(R) = J^n + \underline{I}^m(R)$ for any $m>n$.
By \cite[Theorem III.5.10 and preceeding paragraph]{HM} and the unramifiedness of $\underline{I}^m$, we have $\underline{I}^m(R) = 0$ for $m > \dim R$.
Thus $J^n = I^n(R) = \underline{I}^n(R)$, and so $I^n = \underline{I}^n$.
If $G \subset \underline{GW}(R)$ denotes the subgroup generated by $\langle a \rangle$ for $a\in R^\times$, then $G\to \Z$ is surjective and we similarly get $\underline{GW}(R) = G + I(R) = G$.
All claims follow.
\end{proof}

\begin{rem}
One may prove that for any local ring $R$, $\GW(R)$ is generated by elements of the form $\langle a \rangle$ for $a \in R^\times$.
It follows from Lemma \ref{lem:I-generators}(iii) that for any regular local $k$-algebra $R$ with fraction field $K$, the ring $\underline{GW}(R) \subset \underline{GW}(K) = \GW(K)$ coincides with the image of $\GW(R) \to \GW(K)$.
\end{rem}

\begin{prop}\label{prop:main-fr}
	Let $k$ be a field. Then the canonical maps
	\begin{gather*}
	L_\mot(\Z\times\FSYN_\infty^\fr)^+ \to L_\mot(\FSYN^{\fr,\gp})\\
	\Z\times (\Lhtp\FSYN^\fr_\infty)(k)^+ \to (\Lhtp\FSYN^\fr)(k)^\gp
	\end{gather*}
	are equivalences. The same holds for $\FSYN^\ornt$.
\end{prop}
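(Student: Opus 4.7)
My plan mirrors the proof of Corollary~\ref{cor:main}, but replaces the connectedness of $\FSYN_\infty$ by the weaker statement that $\pi_0$ becomes a group after inverting $h$. First, I would show that $L_\mot(\FSYN^{\fr,\gp})$ is the group completion of $L_\mot(\FSYN^\fr)$ in the Nisnevich $\infty$-topos (and similarly for $\FSYN^\ornt$). Exactly as in Corollary~\ref{cor:main}, this follows from the framed (resp.\ oriented) recognition principle \cite[Theorem 3.5.18]{EHKSY1} (resp.\ \cite[Corollary 3.4.4]{EHKSY3}): the objectwise group completion of $L_\mot(\FSYN^\fr)$ remains $\A^1$-invariant with framed transfers, so it is already motivically local. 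Applying Corollary~\ref{cor:group completion} to $M=\FSYN^\fr$ and $S=\{h\}$ then reduces the problem to showing that $\pi_0^\nis L_\mot(\FSYN^\fr)[h^{-1}]$ and $\pi_0(\Lhtp\FSYN^\fr)(k)[h^{-1}]$ are groups, and likewise for $\FSYN^\ornt$.

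For the statement over the field $k$, I would combine two ingredients. The first is Lemma~\ref{lem:GW-relation}, which gives $\langle a\rangle+\langle -a\rangle=h$ in $\pi_0\Lhtp\FSYN^\fr(k)$; thus each $\langle a\rangle$ becomes invertible in $\pi_0[h^{-1}]$. The second is an analog of Proposition~\ref{prop:main}: using the framed Hilbert scheme $\Hilb^\fr_d(\A^m)$ in place of $\Hilb^\lci_d(\A^m)$, I would apply the moving lemma (Proposition~\ref{prop:moving}) with $U\subset\Hilb^\fr_d(\A^m)$ the open locus classifying finite étale subschemes. Pulling back the universal framed family along the resulting framed cobordism yields a relation $T+\alpha'=\beta'$ in $\pi_0\Lhtp\FSYN^\fr(k)$, with $\alpha',\beta'$ framed finite étale $k$-schemes whose supports may be realized inside $\A^1$, hence of the form $\phi(f)$ for polynomials $f$. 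By \cite[Proposition B.1.4]{EHKSY1}, each of these is $\A^1$-cobordant to $\langle a_i\rangle\cdot d_{\epsilon,i}$, which after iterated application of Lemma~\ref{lem:GW-relation} reduces to a sum of $\langle a\rangle$'s plus multiples of $h$. Combined with the first ingredient, this shows every element of $\pi_0\Lhtp\FSYN^\fr(k)$ is invertible after inverting $h$.

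For the Nisnevich version I would run the same argument on stalks, which are henselizations of smooth $k$-schemes at their points, and in particular regular local $k$-algebras. The needed generation by $\langle a\rangle$ for $a$ a unit is supplied by Lemma~\ref{lem:GW-generators}, with the delicate characteristic-$2$ case routed through Lemma~\ref{lem:I-generators}(iii) (Kato's theorem on $I^n/I^{n+1}$ together with Morrow's computation of $\nu^n$). The $\FSYN^\ornt$ statements proceed identically: the moving argument passes verbatim to the oriented Hilbert scheme, and the relation of Lemma~\ref{lem:GW-relation} descends from framings to orientations, since $\phi(x(x-a))$ and the $\A^1$-homotopy $\phi(x(x-h(t)))$ carry canonical orientations.

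The main obstacle I foresee is the framed moving step. Proposition~\ref{prop:moving} readily produces a framed cobordism on the framed Hilbert scheme, but one must carefully track how the framing manufactured by the moving lemma interacts with the tautological framing on the universal family, and then present the étale-supported endpoints in the form $\phi(f)$ so that \cite[Proposition B.1.4]{EHKSY1} applies. A secondary subtlety concerns the Nisnevich generation statement in characteristic $2$: there $\underline{\GW}\neq\GW$ in general, so one cannot invoke \cite[Corollary I.3.4]{HM} directly and must route through the full machinery of Lemma~\ref{lem:I-generators}.
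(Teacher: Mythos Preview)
Your overall strategy matches the paper's, and the reduction via Corollary~\ref{cor:group completion} to showing that $\pi_0(\Lhtp\FSYN^\fr)(k)[h^{-1}]$ is a group is correct. But the key step over $k$ has a genuine gap. After the moving lemma produces framed finite étale endpoints $\alpha',\beta'$, you assert these are ``of the form $\phi(f)$ for polynomials $f$''. This is not true: a framed finite étale $k$-scheme decomposes as $\coprod_i(\Spec L_i,\langle a_i\rangle)$ with $a_i\in L_i^\times$ an \emph{arbitrary} unit, whereas $\phi(f)$ for a separable $f\in k[x]$ with $L_i\simeq k[x]/(f)$ forces the specific framing $\langle f'(x)\rangle$. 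There is no reason the framing coming from the moving lemma combined with the tautological framing on the universal family should yield $\langle f'(x)\rangle$, and in general it will not (recall $\langle a\rangle$ and $\langle b\rangle$ are distinguished in $\GW$). Your own ``main obstacle'' paragraph identifies exactly this spot but still aims at the wrong target.

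The paper closes this gap with an extra idea you are missing: apply Lemma~\ref{lem:GW-relation} \emph{over $\Spec L$}, not just over $k$. This gives $\langle a\rangle+\langle -a\rangle=h$ in $\Lhtp\FSYN^\fr(L)$, and composing with the finite étale map $\Spec L\to\Spec k$ shows that $(\Spec L,\langle a\rangle)+(\Spec L,\langle -a\rangle)\in\pi_0\Lhtp\FSYN^\fr(k)$ is independent of $a\in L^\times$. For the particular choice $a_0=f'(x)$ one does have $(\Spec L,\langle\pm a_0\rangle)=\phi(\pm f)\sim\langle\pm 1\rangle[L:k]_\epsilon$, which is invertible after inverting $h$. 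Since in a commutative monoid $x+y$ invertible implies $x$ invertible, this handles $(\Spec L,\langle a\rangle)$ for every $a$. A smaller issue: your Nisnevich argument proposes to ``run the same argument on stalks'', but Proposition~\ref{prop:moving} is only stated over a field. The paper instead uses Lemma~\ref{lem:GW-generators} merely to get surjectivity of $\pi_0^\nis\to\underline{\GW}$, and then reduces injectivity (connectedness of the fiber over $0$) to the field case via \cite[Lemma 6.1.3]{morel-connectivity}, where the moving argument is available.
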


\begin{proof}
	Let $f\colon L_\mot(\Z\times\FSYN_\infty^\fr) \to L_\mot(\FSYN^{\fr,\gp})$ be the canonical map. As in the proof of Corollary~\ref{cor:main}, $L_\mot(\FSYN^{\fr,\gp})$ is the group completion of $L_\mot(\FSYN^{\fr})$ in Nisnevich sheaves.
	For the first equivalence, it suffices by Corollary~\ref{cor:group completion} to show that $f$ induces an isomorphism on $\pi_0^\nis$. 
	The $\pi_0^\nis$ of the right-hand side is isomorphic to the unramified Grothendieck–Witt sheaf $\underline{\GW}$, and this isomorphism sends $\langle a\rangle$ to $\langle a\rangle$ \cite[Corollary 3.3.11]{EHKSY2}. If $X$ is an essentially smooth henselian local scheme, then $\underline{\GW}(X)$ is generated by $\langle a\rangle$ for $a\in\sO(X)^\times$ by Lemma~\ref{lem:GW-generators}, and since $\langle a\rangle$ is invertible in the left-hand side by Lemma~\ref{lem:GW-relation}, we deduce that $f$ is an effective epimorphism. Note that a surjective map of discrete monoids whose codomain is a group is injective if and only if its kernel is trivial. It therefore remains to show that the fiber of $f$ over $0$ is connected. By \cite[Lemma 6.1.3]{morel-connectivity}, it suffices to check this on finitely generated separable field extensions of $k$. Thus, it suffices to show that $\pi_0L_\mot(\Z\times\FSYN_\infty^\fr)(k)$ is a group for any field $k$; indeed, by Corollary~\ref{cor:group completion} this implies that $f$ is acyclic on $k$-points, and in particular connected. Since we have a surjection
	\[
	\pi_0\Lhtp(\Z\times\FSYN_\infty^\fr)(k)\to \pi_0L_\mot(\Z\times\FSYN_\infty^\fr)(k),
	\]
	it suffices to show that the left-hand side is a group, which also implies the second equivalence (by Corollary~\ref{cor:group completion} again). 
	
	Let $T\in\FSYN^\fr_d(k)$ be a framed finite syntomic $k$-scheme of degree $d$. Choosing an embedding of $T$ in $\A^m$ and lifting the framing of $T$ to a trivialization of its conormal sheaf, we obtain a $k$-point of the smooth $k$-scheme $\Hilb^\fr_d(\A^m)$. We now apply Proposition~\ref{prop:moving} with $X=\Hilb^\fr_d(\A^m)$ and $U\subset X$ the finite étale locus: we obtain framed finite étale $k$-schemes $A$ and $B$ with a framed cobordism $T\sqcup A\sim B$. Now $B$ is a sum of framed finite étale schemes of the form $(\Spec L,\langle a\rangle)$ with $L/k$ a finite separable field extension and $a\in L^\times$. We are therefore reduced to proving that $(\Spec L,\langle a\rangle)$ is invertible in $\pi_0\Lhtp(\Z\times\FSYN_\infty^\fr)(k)$. By Lemma~\ref{lem:GW-relation}, it is enough to prove that, for \emph{some} $a\in L^\times$, both $(\Spec L,\langle a\rangle)$ and $(\Spec L,\langle -a\rangle)$ are invertible. If $f(x)$ is a monic polynomial such that $L\simeq k[x]/(f(x))$, then $(\Spec L,\langle\pm f'(x)\rangle)\simeq \phi(\pm f)$. By \cite[Proposition B.1.4]{EHKSY1}, $\phi(\pm f)$ is framed cobordant to $\langle\pm 1\rangle[L:k]_\epsilon$, which is clearly invertible in $\pi_0(\Z\times\FSYN_\infty^\fr)(k)$. This completes the proof for $\FSYN^\fr$. The proof for $\FSYN^\ornt$ is exactly the same, using that the unit map $\Omega^\infty_\T\1\to \Omega^\infty_\T\MSL$ is a $\pi_0^\nis$-isomorphism (see~\cite[Example~16.35]{norms} and~\cite[Corollary~3]{muraMSL}).
\end{proof}

Except for the statement about positive characteristic, Theorem~\ref{thm:intro2} follows from Proposition~\ref{prop:main-fr}, \cite[Theorem~3.5.18]{EHKSY1}, and \cite[Corollary~3.4.4(i)]{EHKSY3}.

\section{Removing the plus construction in positive characteristic}
\label{sec:remove-plus}

The following proposition is an elaboration of \cite[Proposition 2.19]{Robalo}.

\begin{prop}
	\label{prop:robalo}
	Let $C$ be an $\sE_k$-monoidal $\infty$-category with $2\leq k\leq\infty$ and let $x\in C$.
	Let $C[x^{-1}]$ be the $\sE_k$-monoidal $\infty$-category obtained from $C$ by inverting $x$, and let $\tel_x(C)$ be the $C$-module colimit of the sequence
	\[
	\dotsb\to C \xrightarrow{\otimes x} C \to \dotsb.
	\]
	Consider the following assertions:
	\begin{enumerate}
		\item[\rm (1)] The cyclic permutation of $x^{\otimes 3}$ becomes homotopic to the identity in $\tel_x(C)$.
		\item[\rm (2)] For some $n\geq 2$, the cyclic permutation of $x^{\otimes n}$ becomes homotopic to the identity in $\tel_x(C)$.
		\item[\rm (3)] The canonical map $\tel_x(C)\to C[x^{-1}]$ is an equivalence.
	\end{enumerate}
	Then $(1)\Rightarrow (2)\Rightarrow (3)$. If $k\geq 3$, then $(3)\Rightarrow (1)$.
\end{prop}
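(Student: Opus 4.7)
The implication $(1)\Rightarrow (2)$ is immediate by taking $n=3$, so the real content lies in $(2)\Rightarrow (3)$ and $(3)\Rightarrow (1)$. My plan is to follow the strategy of \cite[Proposition~2.19]{Robalo}, adapted to the $\sE_k$-monoidal setting for finite $k\geq 2$.

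For $(2)\Rightarrow (3)$, the universal property of $C[x^{-1}]$ as the initial $\sE_k$-monoidal $\infty$-category under $C$ in which $x$ becomes $\otimes$-invertible would yield a canonical $\sE_k$-monoidal functor $\tel_x(C)\to C[x^{-1}]$, provided $x$ is $\sE_k$-invertible in $\tel_x(C)$. The telescope manifestly inverts $x$ as a $C$-module endomorphism, so the only remaining obstruction is compatibility of the inverse with the $\Sigma_n$-actions on $x^{\otimes n}$ coming from the $\sE_k$-structure. Writing elements of $\Sigma_n$ in terms of adjacent transpositions (provided by the $\sE_2$-braiding, which is available for $k\geq 2$) together with cyclic permutations of triples, condition~(2) should supply exactly the missing coherence: a single nullhomotopy of the cyclic permutation on $x^{\otimes n}$ propagates along the shifts of the telescope to trivialize all the relevant permutations, at which point $\tel_x(C)$ satisfies the universal property of $C[x^{-1}]$.

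For $(3)\Rightarrow (1)$ with $k\geq 3$, I assume $\tel_x(C)\simeq C[x^{-1}]$. In $C[x^{-1}]$ the object $x$ is $\otimes$-invertible, hence so is $x^{\otimes 3}$, and the cyclic permutation becomes an automorphism of an invertible object, classified by an element of $\pi_0\Aut(\1)$. The hypothesis $k\geq 3$ guarantees that the $\sE_3$-structure supplies enough coherence (in particular a homotopy between the square of the braiding and the identity) to exhibit a nullhomotopy of this cyclic permutation on invertible objects; transporting along the equivalence $\tel_x(C)\simeq C[x^{-1}]$ then yields~(1).

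The main obstacle I foresee is in $(2)\Rightarrow (3)$: pinning down the precise $\sE_k$-structured diagram that computes $\tel_x(C)$ as an $\sE_k$-monoidal $\infty$-category, and verifying that the single coherence of condition~(2) suffices to lift this diagram so as to match the $\sE_k$-monoidal localization $C[x^{-1}]$. Particular care is needed to ensure the argument uses only the $\sE_2$-braiding, so that the implication remains valid at $k=2$.
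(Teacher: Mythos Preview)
Your proposal for $(2)\Rightarrow(3)$ has a genuine gap. You write that ``the telescope manifestly inverts $x$ as a $C$-module endomorphism,'' but this is precisely the content of (3) and is false in general: in the paper's own example $F=\bigcoprod_{n\geq 0}B\Sigma_n$ with $x$ the generator, one has $\tel_x(F)\simeq\Z\times B\Sigma_\infty$ whereas $F[x^{-1}]=F^\gp$, so $x$ does \emph{not} act invertibly on $\tel_x(F)$. The $C$-module action of $x$ on the telescope and the shift map differ by a system of braidings whose compatibility with the transition maps is controlled exactly by the cyclic permutations appearing in (1) and (2). Your alternative plan---to endow $\tel_x(C)$ with an $\sE_k$-monoidal structure and invoke the universal property of $C[x^{-1}]$---is circular (producing such a structure amounts to proving (3)), and in any case that universal property produces functors \emph{out of} $C[x^{-1}]$, not into it.

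The paper's argument avoids all of this by staying at the level of $C$-modules. One observes that (3) is equivalent to $x$ acting invertibly on $\tel_x(C)$, and computes this action via a ladder whose horizontal maps are $\otimes x^{\otimes(n-1)}$ and vertical maps are $\otimes x$; each square commutes via the cyclic permutation of $x^{\otimes n}$. Under (2) these permutations become identities in $\tel_x(C)$, so the cone is equivalent to the one with trivially commuting squares, which is a colimit cone; hence the action of $x$ is an equivalence.

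Your sketch of $(3)\Rightarrow(1)$ is in the right direction, but the key mechanism is sharper than ``$\sE_3$ supplies a homotopy between the square of the braiding and the identity'': when $x$ is invertible in an $\sE_3$-monoidal $\infty$-category, $\pi_0\Aut(x^{\otimes 3})$ is an abelian group, so the permutation homomorphism from $\Sigma_3$ factors through $\Sigma_3^{\mathrm{ab}}\cong\Z/2$, in which the $3$-cycle is trivial.
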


\begin{proof}
	The implication $(1)\Rightarrow(2)$ is trivial. 
	Note that assertion $(3)$ is equivalent to the assertion that $x$ acts invertibly on the telescope $\tel_x(C)$.
	 Consider the commutative diagram
	\[
	\begin{tikzcd}[column sep=4.5em]
		C \ar{r}{ x^{\otimes (n-1)}} \ar{d}[swap]{ x} & C \ar{r}{ x^{\otimes (n-1)}} \ar{d}[swap]{ x} & \dotsb \\
		C \ar{r}{ x^{\otimes (n-1)}} & C \ar{r}{ x^{\otimes (n-1)}} & \dotsb
	\end{tikzcd}
	\]
	where each square commutes via the cyclic permutation of $x^{\otimes n}$. Let $\mathrm{Seq}$ be the $1$-skeleton of the nerve of the poset $\N$ and let $\sigma\colon \mathrm{Seq}{}^\triangleright \to\InftyCat$ be the cone given by the first row mapping to the colimit of the second row. Then the action of $x$ on $\tel_x(C)$ is the induced map $\colim(\sigma|\mathrm{Seq})\to \sigma(\infty)$.
	Under assumption (2), we obtain an equivalent cone if we replace each cyclic permutation in the above diagram by the identity, which is trivially a colimiting cone. This proves $(2)\Rightarrow (3)$.
	
	If $C$ is $\sE_3$-monoidal and $x\in C$ is invertible, then the cyclic permutation of $x^{\otimes 3}$ is homotopic to the identity. Indeed, $\pi_0\Aut(x^{\otimes 3})$ is an abelian group and the cyclic permutation of order $3$ becomes trivial in the abelianization of $\Sigma_3$. This proves $(3)\Rightarrow (1)$.
\end{proof}

\begin{ex}
	Let $R$ be a derived commutative ring. The cyclic permutation of $R^3$ is induced by a matrix in $SL_3(\Z)$ and hence is $\A^1$-homotopic to the identity. Applying Proposition \ref{prop:robalo} to the $\Einfty$-space $(\Lhtp\Vect)(R)$, we deduce the well-known fact that the canonical map 
	\[\underline\Z\times\Vect_\infty \to K\] 
	is an $\A^1$-equivalence on derived commutative rings, where $\underline\Z$ is the constant sheaf with value $\Z$ and $\Vect_\infty=\colim_n\Vect_n$. This explains why the plus construction is not needed in the equivalences (over a regular base)
	\[
	\Omega^\infty_\T\KGL\simeq L_\mot(\Z\times \Vect_\infty)\simeq L_\mot(\Z\times\Gr_\infty(\A^\infty)).
	\]
\end{ex}

\begin{lem}\label{lem:cyclic}
	Let $p$ be a prime number. Over $\F_p$, there exists a sequence of $C_p$-equivariant framed cobordisms between $p$ with nontrivial $C_p$-action and $\langle -1\rangle p_\epsilon$ with trivial $C_p$-action. In particular, for any $\F_p$-scheme $S$ and any $\alpha\in\FSYN^\fr(S)$, the action of $C_p$ on $p\alpha$ in $(\Lhtp\FSYN^\fr)(S)$ is trivial.
\end{lem}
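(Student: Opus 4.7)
The plan is to exhibit the cobordism concretely via a one-parameter polynomial family that essentially uses the characteristic-$p$ identity $(x+b)^p = x^p + b^p$.

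First I would consider the polynomial $P(b,x) = -x^p + b^{p-1}x \in \F_p[b,x]$ and the subscheme $H = Z(P) \subset \A^1_b \times \A^1_x$. Since $P$ is monic of degree $p$ in $x$ with invertible leading coefficient $-1$, the projection $H \to \A^1_b$ is finite syntomic of degree $p$, and the polynomial $P$ endows it with a relative framing $\phi(P) \in \FSYN^\fr(\A^1_b)$ in the sense recalled before \lemref{lem:GW-relation}. Next I would observe that the translation $(b,x) \mapsto (b,x+b)$ preserves $P$, since
\[
-(x+b)^p + b^{p-1}(x+b) = -x^p - b^p + b^{p-1}x + b^p = -x^p + b^{p-1}x
\]
in characteristic $p$. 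This defines a $C_p$-action on $H$ over $\A^1_b$ preserving the framing $\phi(P)$, so the whole family is a $C_p$-equivariant object of $\FSYN^\fr(\A^1_b)$.

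Second I would analyze the fibers. At $b=1$, the zero locus $Z(-x^p+x) = \F_p \subset \A^1_{\F_p}$ is étale of degree $p$ with $C_p$ acting by cyclic translation $x \mapsto x+1$; the derivative $\partial P/\partial x(1,i) = -pi^{p-1}+1 = 1$ gives framing $\langle 1\rangle$ at each root, so the fiber is the framed scheme $p = p \cdot \langle 1\rangle$ with cyclic $C_p$-action. At $b=0$, the zero locus $Z(-x^p) = \Spec \F_p[x]/(x^p)$ is the length-$p$ thick point with trivial $C_p$-action (since $x \mapsto x+0 = x$) and framing $\phi(-x^p)$. By \cite[Proposition B.1.4]{EHKSY1}, $\phi(-x^p)$ is non-equivariantly framed cobordant to $\langle -1\rangle p_\epsilon$; since both endpoints of this second cobordism carry the trivial $C_p$-action, it can be promoted to the equivariant setting by equipping the family with the trivial action throughout. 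Concatenating yields the desired sequence of $C_p$-equivariant framed cobordisms from $p$ (with cyclic action) through $\phi(-x^p)$ (with trivial action) to $\langle -1\rangle p_\epsilon$ (with trivial action).

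For the second assertion, I would base-change this zigzag of $C_p$-equivariant $\A^1$-homotopies to $S$ and multiply by $\alpha \in \FSYN^\fr(S)$ using the $\Einfty$-monoid structure on $\FSYN^\fr$. This produces a chain of $C_p$-equivariant $\A^1$-homotopies connecting $p\alpha$, equipped with the $C_p$-action by cyclic permutation of the $p$ summands, to $(\langle -1\rangle p_\epsilon) \cdot \alpha$, equipped with the trivial $C_p$-action. Triviality of the action on the right-hand side then forces triviality on the left in $(\Lhtp \FSYN^\fr)(S)$.

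The main delicate point is to ensure that the $C_p$-invariance of $P$ as a polynomial genuinely upgrades $\phi(P)$ to a $C_p$-equivariant framing, i.e., to a $C_p$-invariant trivialization of the relative cotangent complex $\sL_{H/\A^1_b}$ in $K$-theory, rather than merely a $C_p$-invariant generator of the conormal sheaf. This should follow from functoriality of the assignment $f \mapsto \phi(f)$ recalled in the paper together with the fact that the whole construction is natural in the pair $(\text{polynomial},\text{action preserving it})$, but it warrants care.
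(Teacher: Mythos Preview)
Your proof is correct and is essentially identical to the paper's: your polynomial $P(b,x)=-x^p+b^{p-1}x$ is literally equal to the paper's $-f(t,x)=-x(x-t)(x-2t)\cdots(x-(p-1)t)$ over $\F_p$ (with $b=t$), written in expanded rather than factored form, and the analysis of the fibers and the $C_p$-action proceeds the same way. Your closing concern about equivariance of the framing is handled exactly as you suggest---since the generator $P$ of the conormal sheaf is $C_p$-invariant, so is the induced trivialization---and the paper dispatches this in a single sentence.
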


\begin{proof}
	Let $f(t,x)=x(x-t)(x-2t)\dotsb(x-(p-1)t)$. 
	Consider the finite syntomic $\F_p[t]$-scheme
		\[
		X=\Spec \F_p[t,x]/(f(t,x)),
		\]
		with the framing $\tau$ induced by $-f(t,x)$.
		The group $C_p$ acts on $X$ over $\F_p[t]$ by $x\mapsto x+t$. As this action fixes $-f(t,x)$, we obtain an action of $C_p$ on $(X,\tau)$.
		The fiber $X_1$ consists of $p$ points cyclically permuted and trivially framed: indeed, the framing on the point $\Spec\F_p[x]/(x-i)$ is induced by $(-1)^{p}(p-1)!(x-i)=x-i$. On the other hand, the fiber $X_0$ is $\Spec\F_p[x]/(x^p)$ with trivial action and framing induced by $-x^p$. By \cite[Proposition B.1.4]{EHKSY1}, $X_0$ is framed cobordant to $\langle -1\rangle p_\epsilon$.
\end{proof}

\begin{rem}
	Let $S$ be an affine $\F_p$-scheme, $\sM$ an étale sheaf of $\Einfty$-spaces on $\Sm_S$, and $m\in\sM(S)$. Then the action of $C_p$ on $m^p$ in $(\Lhtp \sM)(S)$ is trivial, because $B_\et C_p$ is $\A^1$-contractible on affine $\F_p$-schemes by Lemma~\ref{lem:contr} below. This does not apply directly to $\FSYN^\fr$, which is not an étale sheaf. However, by \cite[\sectsign 4.2.36]{EHKSY1}, there is an $\Einfty$-map $\FET\to \FSYN^\fr$ where $\FET$ is the moduli stack of finite étale schemes, which is an étale sheaf. This gives an alternative proof of the last statement of Lemma~\ref{lem:cyclic}.
\end{rem}

\begin{lem} \label{lem:contr} 
	The presheaf $B_\et C_p$ is $\A^1$-contractible on affine $\F_p$-schemes.
\end{lem}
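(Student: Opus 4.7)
My plan is to exploit the Artin--Schreier short exact sequence of étale sheaves of abelian groups on $\F_p$-schemes,
\[
0 \to C_p \to \G_a \xrightarrow{F-1} \G_a \to 0,
\]
where $F$ denotes the Frobenius, so as to reduce the $\A^1$-contractibility of $B_\et C_p$ to the elementary $\A^1$-contractibility of $\G_a = \A^1$.

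First I would deloop the Artin--Schreier sequence to produce a fiber sequence of étale sheaves of pointed spaces (equivalently, of abelian $\Einfty$-groups),
\[
B_\et C_p \to B_\et\G_a \xrightarrow{B(F-1)} B_\et\G_a.
\]
For any affine $\F_p$-scheme $T$ the vanishing $H^1_\et(T,\G_a) = 0$ gives $B_\et\G_a(T) \simeq K(\sO(T),1)$; hence on the site of affine $\F_p$-schemes the displayed diagram is already a pointwise fiber sequence of presheaves of pointed $\Einfty$-groups, equivalently of connective spectra, and by stability it is also a cofiber sequence of presheaves of spectra. I would then apply $\Lhtp$: being a left Bousfield localization of presheaves of spectra (with $\A^1$-invariant presheaves as the local objects), it is a left adjoint and therefore preserves all colimits, in particular cofibers and, by stability, fibers. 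The crucial elementary input is that $\Lhtp \G_a \simeq 0$, which holds because the multiplication map $\A^1 \times \A^1 \to \A^1$, $(t,x) \mapsto tx$, is a naive $\A^1$-homotopy from $\id_{\A^1}$ to the zero map. It will follow that $\Lhtp B_\et\G_a \simeq (\Lhtp \G_a)[1] \simeq 0$ and, by the fiber sequence, $\Lhtp B_\et C_p \simeq 0$ in presheaves of spectra. Finally, $\Omega^\infty \colon \Sp_{\ge 0} \to \Spc_*$ preserves sifted colimits and so commutes with $\Lhtp$, giving $\Lhtp B_\et C_p \simeq \ast$ as a presheaf of pointed spaces on affine $\F_p$-schemes.

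The main technical subtlety I anticipate is the identification step: checking that the étale fiber sequence coincides pointwise with a presheaf fiber sequence on affines. The vanishing $H^1_\et(T,\G_a) = 0$ is exactly what makes this work. Once that identification is secured, the remainder of the argument is a formal consequence of the $\A^1$-contractibility of $\G_a$ together with the colimit-preservation of $\Lhtp$ in presheaves of spectra.
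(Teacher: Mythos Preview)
Your proof is correct and follows the same overall strategy as the paper: both use the Artin--Schreier fiber sequence $B_\et C_p \to B_\et \G_a \to B_\et \G_a$, the $\A^1$-contractibility of $\G_a$, and the vanishing $H^1_\et(T,\G_a)=0$ for affine $T$. The one point of divergence is in how $\Lhtp$ is shown to preserve this fiber sequence. The paper stays in unstable presheaves of spaces and invokes the general criterion \cite[Lemma~5.5.6.17]{HA} for a geometric realization of a cartesian square to remain cartesian, which applies precisely because $\pi_0(B_\et\G_a(T))=H^1_\et(T,\sO)=\ast$ on affines. You instead exploit that all objects involved are grouplike $\Einfty$-spaces and pass to presheaves of connective spectra, where the fiber sequence is simultaneously a cofiber sequence and is therefore preserved by the left adjoint $\Lhtp$ for free; you then return to spaces using that $\Omega^\infty\colon\Sp_{\ge 0}\to\Spc_*$ commutes with sifted colimits. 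Your route trades the somewhat technical \cite[Lemma~5.5.6.17]{HA} for the (standard) bookkeeping of passing through spectra; both arguments use $H^1_\et(T,\G_a)=0$ in the same essential way, and both are equally valid.
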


\begin{proof} 
	By the Artin–Schreier sequence \cite[Tag 0A3J]{stacks}, we have a fiber sequence of presheaves \[B_{\et}C_p \rightarrow B_{\et}\bG_a \rightarrow B_{\et}\bG_a,\] where $\bG_a$ is the additive group scheme. Since $\Lhtp B_{\et} \bG_a \simeq \ast$, it suffices to prove that this sequence remains a fiber sequence after applying $\Lhtp$. This follows from a general criterion for the geometric realization of a cartesian square to remain cartesian \cite[Lemma 5.5.6.17]{HA}, which applies because $\pi_0(B_{\et}\bG_a(X)) = H^1_\et(X, \sO) = *$ for $X$ an affine scheme \cite[Tags 03P2 and 01XB]{stacks}.
\end{proof}

The following proposition completes the proofs of all the theorems in the introduction.

\begin{prop}
	Let $k$ be a field of positive characteristic. Then the canonical maps
	\begin{gather*}
	L_\mot(\FSYN_\infty) \to L_\mot(\FSYN_\infty)^+\\
	(\Lhtp\FSYN_\infty)(k) \to (\Lhtp\FSYN_\infty)(k)^+
	\end{gather*}
	are equivalences. The same holds for $\FSYN^\fr_\infty$ and $\FSYN^\ornt_\infty$.
\end{prop}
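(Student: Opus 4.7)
The plan is to upgrade the canonical acyclic comparison $\tel_m(M) \to M^\gp$ from Proposition~\ref{prop:group completion} to an honest equivalence, for $(M, m)$ one of $(\FSYN, 1)$, $(\FSYN^\fr, h)$, $(\FSYN^\ornt, h)$. Granting this, the conclusion is immediate: Corollary~\ref{cor:main} and Proposition~\ref{prop:main-fr} identify the targets of the relevant plus constructions with $L_\mot(M^\gp)$ and $(\Lhtp M)(k)^\gp$, so once the unplussed comparison maps already agree with these group completions, the plus construction must act as the identity on both sides.

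To strengthen acyclic to equivalence I would invoke Proposition~\ref{prop:robalo}: it suffices to exhibit a trivialization of the cyclic permutation of $m^{\otimes p}$ in the telescope $\tel_m(M)$, where $p = \chr(k) > 0$. Lemma~\ref{lem:cyclic} supplies exactly such a trivialization for $\FSYN^\fr$. Taking $\alpha = h$, the lemma produces a $C_p$-equivariant framed cobordism in $(\Lhtp \FSYN^\fr)(k)$ between $ph = h^{\otimes p}$ carrying the cyclic action and the same object with trivial action. This is precisely condition~(2) of Proposition~\ref{prop:robalo}.

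The cases $\FSYN^\ornt$ and $\FSYN$ are handled by functoriality along the commutative-monoid forgetful morphisms $\FSYN^\fr \to \FSYN^\ornt$ and $\FSYN^\fr \to \FSYN$, which are compatible with cyclic permutations. For $\FSYN^\ornt$ one simply pushes forward the trivialization of the $C_p$-action on $h^{\otimes p}$. For $\FSYN$ one instead takes $\alpha = \langle 1 \rangle \in \FSYN^\fr(k)$, whose underlying finite syntomic $k$-scheme is the generator $1 \in \FSYN$; the trivialization of the $C_p$-action on $p\cdot\langle 1\rangle$ supplied by Lemma~\ref{lem:cyclic} then descends along the forgetful map to a trivialization of the cyclic action on $1^{\otimes p} = p$ in $\FSYN$.

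The main technical subtlety I anticipate is bridging the gap between Proposition~\ref{prop:robalo}, which is phrased for $\sE_k$-monoidal $\infty$-categories, and the two settings we actually need: the $\sE_\infty$-space $(\Lhtp M)(k)$ for the second equivalence, and the $\sE_\infty$-monoid $L_\mot M$ in the $\infty$-topos $\H(k)$ for the first. The former is a direct application of the proposition. For the latter one uses that the proof of Proposition~\ref{prop:robalo} is purely formal — all it requires is the triviality of the cyclic permutation on the telescope — so the argument adapts internally to any $\infty$-topos; alternatively, since $L_\mot$ commutes with filtered colimits and with the localization $[m^{-1}]$, one can verify the equivalence $\tel_m(L_\mot M) \simeq L_\mot(M^\gp)$ stalkwise at separable finitely generated field extensions of $k$, reducing to the $(\Lhtp)(K)$ case already handled.
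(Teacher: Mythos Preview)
Your proposal is correct and follows essentially the same route as the paper. The paper's proof is a single sentence: Lemma~\ref{lem:cyclic} and Proposition~\ref{prop:robalo} make the left-hand sides commutative monoids, Corollary~\ref{cor:main} and Proposition~\ref{prop:main-fr} make them grouplike, hence the plus construction is trivial. Your version unpacks this by treating the three cases $\FSYN$, $\FSYN^\fr$, $\FSYN^\ornt$ separately (choosing $\alpha=\langle 1\rangle$ and $\alpha=h$ in Lemma~\ref{lem:cyclic} and pushing forward along the forgetful maps), and you correctly flag the minor issue that Proposition~\ref{prop:robalo} is stated for $\sE_k$-monoidal $\infty$-categories rather than $\sE_k$-monoids in a topos; as you note, the argument is formal and transports without change.
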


\begin{proof}
	By Lemma~\ref{lem:cyclic} and Proposition~\ref{prop:robalo}, the left-hand sides are commutative monoids, and they are grouplike by Corollary~\ref{cor:main} and Proposition~\ref{prop:main-fr}. Hence they coincide with their plus constructions.
\end{proof}

\begin{rem}
	Let $C$ be a smooth curve over a field $k$ of characteristic zero. If $X$ is a finite flat $C$-scheme with an action of the cyclic group $C_n$, then the locus of points $c\in C$ such that $C_n$ acts trivially on the fiber $X_c$ is clopen. Indeed, that locus is the equalizer of a pair of sections of a finite flat $C$-subgroup scheme of $\Aut(X/C)$, which is necessarily étale.	
	In particular, unlike in positive characteristic, there cannot exist a $C_n$-equivariant cobordism between a finite syntomic $k$-scheme with nontrivial $C_n$-action and one with trivial $C_n$-action. The group homomorphism $C_n\to \pi_1(\Lhtp\FSYN_\infty)(k)$ could nevertheless be trivial for other reasons.
\end{rem}




\newcommand{\etalchar}[1]{$^{#1}$}
\providecommand{\bysame}{\leavevmode\hbox to3em{\hrulefill}\thinspace}
\providecommand{\MR}{\relax\ifhmode\unskip\space\fi MR }
\providecommand{\MRhref}[2]{%
  \href{http://www.ams.org/mathscinet-getitem?mr=#1}{#2}
}
\providecommand{\href}[2]{#2}

\end{document}